\newtheorem{thm}{Theorem}[section]
\newtheorem{lem}[thm]{Lemma}
\newtheorem{prop}[thm]{Proposition}
\newtheorem{cor}[thm]{Corollary}
\theoremstyle{definition}
\theoremstyle{remark}
\theoremstyle{remark}
\theoremstyle{remark}
\newtheorem*{sq}{\bf Serre's Question}
\theoremstyle{remark}
\newtheorem*{tq}{\bf Totaro's Question}
\theoremstyle{remark}
\theoremstyle{remark}
\DeclareMathOperator{\Aut}{Aut}
\DeclareMathOperator{\Br}{Br}
\DeclareMathOperator{\id}{id}
\DeclareMathOperator{\ind}{ind}
\DeclareMathOperator{\per}{per}
\DeclareMathOperator{\ch}{char}
\DeclareMathOperator{\Gal}{Gal}
\DeclareMathOperator{\tr}{tr}
\DeclareMathOperator{\Spec}{Spec}
\DeclareMathOperator{\Hom}{Hom}
\DeclareMathOperator{\PGL}{PGL}
\DeclareMathOperator{\SO}{SO}
\DeclareMathOperator{\chargroup}{\textbf{X}}
\DeclareMathOperator{\GL}{GL}
\DeclareMathOperator{\Z}{\mathbb{Z}}
\DeclareMathOperator{\Q}{\mathbb{Q}}
\DeclareMathOperator{\G}{\mathbb{G}}
\newcommand{\etale}{\'etal\@ifstar{\'e}{e\space}}
\newcommand{\CT}{Colliot-Th\'el\`en\@ifstar{\'e}{e}}
\newcommand{\Vosk}{Voskresenski{\u\i}}
\newcommand*{\TitleFont}{%
      \usefont{\encodingdefault}{\rmdefault}{b}{n}%
      \fontsize{14}{20}%
      \selectfont}
\newcommand*{\AddFont}{%
      \usefont{\encodingdefault}{\rmdefault}{}{n}%
      \fontsize{10}{20}%
      \selectfont}
\begin{document}

\nocite{*}

\title{\TitleFont \textbf{TOTARO'S QUESTION FOR TORI OF LOW RANK}}
\author{REED LEON GORDON-SARNEY}
\date{\AddFont DEPARTMENT OF MATHEMATICS \& COMPUTER SCIENCE \\ EMORY UNIVERSITY, ATLANTA, GA 30322 USA}

\maketitle

\begin{abstract}
Let $G$ be a smooth connected linear algebraic group and $X$ be a $G$-torsor.
Totaro asked: if $X$ admits a zero-cycle of degree $d \geq 1$, then does $X$ have a closed \etale point of degree dividing $d$?
This question is entirely unexplored in the literature for algebraic tori.
We settle Totaro's question affirmatively for algebraic tori of rank $\leq 2$.
\end{abstract}
\vspace{0.15cm}
\section{Introduction}
Let $X$ be a smooth quasiprojective variety over a field $k$.
Define its \emph{index}, denoted $\ind(X)$, to be the minimal positive degree of a zero-cycle on $X$.
This is nothing more the greatest common divisor of degrees of field extensions $L/k$ such that $X(L) \neq \varnothing$.
If $X$ has a rational point, then clearly $\ind(X) = 1$; but the converse is false in general.
Striking counterexamples to the converse are found among conic bundles over $\mathbb{P}_{\Q_p}^1$ (due to \CT--Coray \cite{colliocoray79}), affine homogeneous spaces under a smooth connected linear algebraic group over $\Q_p$ with finite stabilizers (due to Florence \cite{florence04}), and projective homogeneous spaces under a smooth connected linear algebraic group over $\Q_p((t))$ (due to Parimala \cite{pari05}).

Serre asked if every index 1 \emph{principal} homogeneous space (or torsor) under a smooth connected linear algebraic group $G$ over a field $k$ has a rational point \cite{serre95}.
Such spaces are classified by the pointed Galois cohomology set $H^1(k,G)$; for any $X \in H^1(k,G)$ and any field extension $L/k$, $X(L) \neq \varnothing$ if and only if $X_L = 1 \in H^1(L,G_L)$.
So the index of a $G$-torsor $X$ over $k$ is exactly the greatest common divisor of degrees of field extensions $L/k$ such that $X_L = 1 \in H^1(L,G_L)$.
Rephrased in the language of Galois cohomology,
\begin{sq}[1995]
Let $G$ be a smooth connected linear algebraic group over a field $k$, and let $X \in H^1(k,G)$ be a $G$-torsor over $k$. 
If $\ind(X) = 1$, then is $X = 1 \in H^1(k,G)$?
\end{sq}
No counterexamples to Serre's question are known, and there are positive answers in some special cases:
the case of $\PGL_n$ is known from the classical theory of central simple algebras;
the case of $\SO_n$ is due to Springer \cite{springer52};
the case of unitary groups is a result of Bayer--Lenstra \cite{bayerlenstra90};
and Sansuc proved that Serre's question has an positive answer for any smooth connected linear algebraic group over a number field or a $p$-adic field \cite{sansuc81}.
One should refer to Black \cite{black11a, black11b} for further work on this question.

However, for abelian $G$, a positive answer to Serre's question is a trivial consequence of the fact that the order of $X$ in the \emph{abelian group} $H^1(k,G)$, called the \emph{period} of $X$ and denoted $\per(X)$, divides $\ind(X)$ (cf. Lemma 3.1).
Totaro generalized Serre's question in a natural way that was non-obvious even for abelian $G$: he asked if the existence of a zero-cycle on $X$ of degree $d \geq 1$ implies the existence of a closed \etale point on $X$ of degree dividing $d$ \cite{totaro04}.
Reformulating in the language of Galois cohomology as before,
\begin{tq}[2004]
Let $G$ be a smooth connected linear algebraic group over a field $k$, and let $X \in H^1(k,G)$ be a $G$-torsor over $k$. 
Is there a separable field extension $F/k$ of degree $\ind(X)$ such that $X_F = 1 \in H^1(F,G_F)$?
\end{tq}
No counterexamples to Totaro's question are known, but affirmative proofs are scarcer than those for Serre's question:
the case of $\PGL_n$ is again a classical theorem about central simple algebras;
in the paper where he first asked the question, Totaro answered it positively for split simply connected groups of type $G_2$, $F_4$, or $E_6$ (with a partial result for $E_7$) \cite{totaro04};
Garibaldi--Hoffman improved upon this result to give an positive answer for groups of type $G_2$, reduced of type $F_4$, and simply connected of types ${}^1 E_{6,6}^{\, 0}$ or ${}^1 E_{6,2}^{\, 28}$ \cite{garihoff06};
and Black--Parimala settled the question for simply connected semisimple groups of rank $\leq 2$ over fields of characteristic $\neq 2$ \cite{blackpari14}.
Further exposition can be found in Black--Parimala \cite{blackpari14}.

Suffice it to say that Totaro's question has a rich history but is wide open.
In particular, it is completely unexplored in the literature for tori.
Our main result is (cf. Section 5)

\begin{thm}
\label{mainthm}
Totaro's question has a positive answer for tori of rank $\leq 2$.
\end{thm}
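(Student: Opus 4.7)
The plan is a case analysis based on the classification of tori of rank at most $2$ by their character lattices. The finite subgroups of $\GL_2(\Z)$ form a short list---cyclic of order $1, 2, 3, 4, 6$ and dihedral of order $4, 6, 8, 12$, thirteen conjugacy classes in all---so the minimal splitting field $K/k$ of such a torus $T$ is Galois with $[K:k] \leq 12$, reducing the problem to a finite catalogue of cases indexed by the isomorphism type of $\Gal(K/k) \hookrightarrow \GL_2(\Z)$.

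First I would dispose of the rank $1$ case. If $T = \G_m$ then $H^1(k,T) = 0$ by Hilbert 90. Otherwise $T$ is a norm-one torus for some separable quadratic extension $L/k$, in which case $H^1(k,T) \cong k^\times/N_{L/k}L^\times$; a class trivial in this quotient has index $1$ and is killed by $F = k$, while a nontrivial class has index $2$ (since $T$ splits over $L$) and is killed by $F = L$. For rank $2$, I would then set aside the quasi-trivial tori (products of Weil restrictions of $\G_m$), whose cohomology vanishes by Shapiro's lemma and Hilbert 90, so only the essentially anisotropic tori in the list need analysis.

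For each remaining class, the approach is: (i) fix a flasque resolution $1 \to S \to P \to T \to 1$ with $P$ quasi-trivial, which realizes $H^1(k,T)$ as an explicit quotient of $S(k)$ by norms from subfields of $K$; (ii) translate the hypothesis $\ind(X) = d$ into a concrete arithmetic condition on a representative cocycle; and (iii) construct a separable extension $F/k$ of degree exactly $d$, built from subfields of $K$ (or small composites thereof), over which the resulting class vanishes. The cohomological inputs driving this are Lemma~3.1 (so $\per(X)$ divides $d$, bounding the order of the class in $H^1(k,T)$) together with the restriction-corestriction identity $\mathrm{cor} \circ \mathrm{res} = [F:k]$, which controls what trivializing extensions are compatible with a prescribed index.

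The principal obstacle will be realizing the \emph{exact} degree $d$ rather than merely a divisor or multiple. In the cyclic cases, the intermediate field structure of $K/k$ is totally ordered, so a direct subfield construction should work. The dihedral cases $D_3, D_4, D_6$ are more delicate because $K/k$ then has several non-conjugate intermediate subfields of the same degree, and the trivializing $F$ must be chosen judiciously from among them. I expect the heart of the argument to consist in showing, case by case, that from the cocycle data of $X$ one can extract a specific subextension of $K$ (or a small composite involving a ``generic splitting'' extension) whose degree matches $\ind(X)$ and over which $X_F$ is trivial.
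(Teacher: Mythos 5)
Your high-level strategy matches the paper's: classify rank $\leq 2$ tori by the conjugacy class of the image of $\Gal(E/k)$ in $\GL_2(\Z)$ and work case by case. The rank 1 and quasitrivial cases are handled correctly, and your observation that $\per(X) \mid \ind(X)$ and restriction--corestriction will control the arithmetic is also what the paper uses. But the proposal stops exactly where the proof has to start: steps (ii) and (iii) of your plan---``translate $\ind(X) = d$ into a concrete condition'' and ``construct $F$ of degree exactly $d$''---are precisely what requires new ideas, and you supply none of them. You even flag this yourself (``I expect the heart of the argument to consist in showing, case by case, \ldots''), which is an honest acknowledgment that this is a roadmap, not a proof.

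Two specific gaps are worth naming. First, for $\Gal(E/k) \cong \Z/2\Z \times \Z/2\Z$ with $T \cong R^{(1)}_{L_1/k}\G_m \times R^{(1)}_{L_2/k}\G_m$, a torsor of index $2$ is a pair of quaternion division algebras $(D_1, D_2)$ split by $L_1, L_2$ respectively, and the assertion that there is a single quadratic $F$ splitting both is \emph{not} a formal consequence of flasque resolutions or restriction--corestriction: the paper needs Albert's theorem on $D_1 \otimes_k D_2$, together with a reduction via Gabber--Liu--Lorenzini to the situation of an odd-degree extension followed by a quadratic one. Your framework does not produce this. Second, for $\Gal(E/k) \cong \Z/4\Z$ (and the $D_4$ cases, which reduce to it), the paper's Proposition 4.1 is a genuinely hard computation: writing the norm form $N_{L/K}$ in coordinates over $k$, extracting two $k$-quadratic forms $Q^1, Q^2$ on $L^2$, applying Springer's theorem on odd-degree extensions to make $Q^2$ isotropic, and then producing the trivializing quadratic $F$ from an isotropic vector, with a separate and delicate analysis in characteristic $2$. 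Nothing in the flasque-resolution recipe predicts that quadratic form theory enters at all, let alone how. In short, the proposal identifies the right skeleton but omits every nontrivial input (Albert, Springer, Gabber--Liu--Lorenzini, and the coprime-index argument of Proposition 4.2), so it does not constitute a proof of the theorem.
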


We remark that the theorem is independent of the perfection of the ground field.
Define the \emph{separable index} of a variety $X$ over a field, denoted $\ind_\textup{s}(X)$, to be the minimal positive degree of a zero-cycle of closed \etale points on $X$.
The question of equality between $\ind(X)$ and $\ind_\textup{s}(X)$ was raised by Lang--Tate and answered affirmatively by recent work of Gabber--Liu--Lorenzini when $X$ is geometrically smooth, regular, and of finite type over a field \cite{gll13}.
Since torsors under smooth tori over fields satisfy these hypotheses, we need only consider \emph{separable} field extensions in the proof of Theorem \ref{mainthm}.

Now, if $X$ is regular over a field and $U \subseteq X$ is open and dense, then $\ind(X) = \ind(U)$ by a general moving lemma for zero-cycles.
So the index is a birational invariant among regular varieties over a given field.
Together with Theorem \ref{mainthm}, we obtain from this (cf. Section 6)

\begin{cor}
\label{smoocomp}
Let $X$ be a regular variety over a field containing a principal homogeneous space of a smooth torus of rank $\leq 2$ as a dense open subset.
If $X$ admits a zero-cycle of degree $d \geq 1$, then $X$ has a closed \etale point of degree dividing $d$.
\end{cor}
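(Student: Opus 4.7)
The plan is to reduce the corollary to Theorem \ref{mainthm} via the birational invariance of the index. Let $U \subseteq X$ be the dense open subset that is a $T$-torsor $Y$ for some smooth torus $T$ of rank $\leq 2$. Since $X$ is regular, the moving lemma for zero-cycles already cited in the excerpt gives $\ind(X) = \ind(U) = \ind(Y)$. In particular, the hypothesis that $X$ carries a zero-cycle of degree $d \geq 1$ translates into $\ind(Y) \mid d$.

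With that translation in hand, Theorem \ref{mainthm} applies directly to the $T$-torsor $Y$: it produces a separable field extension $F/k$ with $[F:k] = \ind(Y)$ such that $Y_F = 1 \in H^1(F,T_F)$, i.e., $Y(F) \neq \varnothing$. Choosing any $F$-point of $Y$ and composing with the open immersion $Y \hookrightarrow X$, I obtain a morphism $\Spec F \to X$ that factors through a closed point $x \in X$ with residue field $\kappa(x) \subseteq F$ separable over $k$. Since $[\kappa(x):k]$ divides $[F:k] = \ind(Y)$, and $\ind(Y)$ in turn divides $d$, this closed \etale point of $X$ has degree dividing $d$, which is exactly what is required.

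There is no substantive obstacle here: the only ingredients are (i) the moving lemma, stated in the excerpt, to migrate from $X$ to its torsor open $U = Y$, and (ii) Theorem \ref{mainthm} itself. The Gabber--Liu--Lorenzini equality $\ind = \ind_{\textup s}$ is already baked into the statement of Theorem \ref{mainthm}, so no extra care regarding separability is needed in passing between the zero-cycle on $X$ and the closed \etale point we ultimately produce.
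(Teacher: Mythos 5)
Your proof is correct and follows essentially the same route as the paper: both invoke the moving lemma for zero-cycles on regular varieties to transfer a degree-$d$ zero-cycle from $X$ to the open torsor $Y$, and then apply Theorem \ref{mainthm} to $Y$. The only difference is cosmetic—you phrase the transfer step via the index equality $\ind(X)=\ind(Y)$ and spell out how the $F$-point of $Y$ yields a closed \etale point, while the paper states the moving-lemma step directly in terms of zero-cycles.
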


In particular, Manin proved that del Pezzo surfaces of degree 6 are toric varieties as in Corollary \ref{smoocomp} \cite{manin72}.
So as a special case of the corollary, we have

\begin{cor}
Let $X$ be a del Pezzo surface of degree 6.
If $X$ admits a zero-cycle of degree $d \geq 1$, then $X$ has a closed \etale point of degree dividing $d$.
\end{cor}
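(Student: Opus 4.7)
The corollary is supposed to follow immediately once we combine Corollary \ref{smoocomp} with Manin's structural result on del Pezzo surfaces of degree $6$, so the plan is simply to verify that the hypotheses of Corollary \ref{smoocomp} hold for such an $X$.

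My first step would be to invoke Manin's theorem \cite{manin72}: any del Pezzo surface $X$ of degree $6$ over a field $k$ is a smooth projective toric variety, meaning there is a rank $2$ torus $T$ over $k$ and a $T$-torsor $Y \in H^1(k, T)$ such that $Y$ embeds as a dense open subvariety of $X$. Concretely, one identifies $T$ with the complement in $X$ of the union of the six $(-1)$-curves (taken in the geometric picture) or more intrinsically by describing the torus of automorphisms acting with an open orbit; this data descends to $k$ thanks to the Galois-equivariance of the hexagon of $(-1)$-curves. Since $X$ is smooth and projective over $k$, it is in particular regular.

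With this setup in hand, Corollary \ref{smoocomp} applies verbatim: $X$ is a regular variety over $k$ containing a principal homogeneous space under a smooth torus of rank $\leq 2$ as a dense open subset. Therefore, if $X$ admits a zero-cycle of degree $d \geq 1$, then $X$ has a closed \etale point of degree dividing $d$.

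The only substantive content is Manin's identification of the open toric stratum of a del Pezzo of degree $6$, which is classical; everything else is an invocation of the main theorem together with the birational invariance of the index used to prove Corollary \ref{smoocomp}. There is no genuine obstacle here once Theorem \ref{mainthm} is in hand---the corollary is a pure application, and the only thing to be careful about is that the torus $T$ really does exist over $k$ (not merely after base change to $\bar k$), which is exactly what Manin's theorem provides.
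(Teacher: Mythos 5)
Your proof is correct and matches the paper's argument: the paper likewise cites Manin's theorem that a degree-6 del Pezzo surface contains a torsor under a rank-2 torus as a dense open subset, then applies Corollary \ref{smoocomp} (stated as Corollary \ref{predpcor} in the body) verbatim.
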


\noindent
\textbf{Acknowledgments.}
We thank R. Parimala, V. Suresh, N. Bhaskhar, and J.-L. \CT\ for their tremendous insights, advice, and support throughout the writing of this paper.
The author was partially supported by the Robert W. Woodruff Foundation and the National Science Foundation under grants DMS-1401319 and FRG-1463882.

\section{Preliminaries on Tori}

Let $k$ be a field and $k^{\textup{s}}$ be its separable closure. 
For any \etale algebra $A/k$, let $\G_{m,A}$ (or just $\G_m$ when the base is understood) be the abelian group scheme $\Spec A[t,t^{-1}]$. 
A connected linear algebraic group $T/k$ is called an \emph{algebraic torus}, \emph{k-torus}, or simply a \emph{torus} if \[T_{k^{\textup{s}}} := T \times_k k^{\textup{s}} \cong \G_{m,k^{\textup{s}}}^r\] for some $r \geq 1$, which is called the \emph{rank} of the torus. 
If $E/k$ is a field extension such that $T_{E} \cong \G_{m,E}^r$, then $E$ is called a \emph{splitting field} of (and is said to \emph{split}) $T$.

For any finite \etale algebra $A/k$, let $R_{A/k}$ denote the \emph{Weil restriction} functor (also known as the \emph{restriction of scalars} functor), which takes $A$-schemes to $k$-schemes and, in particular, takes $A$-tori to $k$-tori.
In particular, for any finite separable field extension $L/k$ and any $L$-torus $T$, $R_{L/k}T$ is a $k$-torus.
A $k$-torus $T$ is called \emph{quasitrivial} if it is isomorphic to a finite product of tori of the form $R_{L_i/k}\G_m$ where each $L_i/k$ is a finite separable field extension.
For any finite separable field extension $L/k$, call \[R_{L/k}^{(1)}\G_m := \ker [ R_{L/k}\G_m \xrightarrow{N_{L/k}} \G_m ]\] the \emph{norm torus} associated to that extension; $R_{L/k}^{(1)}\G_m$ evidently has rank $[L:k] - 1$.

Now, let $\Gamma = \Gal(k^{\textup{s}}/k)$.
For any rank $r$ $k$-torus $T$, define its \emph{character module} to be \[\chargroup(T) := \Hom(T_{k^{\textup{s}}},\G_{m,k^{\textup{s}}})\ [\ \cong \Hom(\G_{m,k^{\textup{s}}}^r, \G_{m,k^{\textup{s}}}) \cong \Z^r\, ].\] 
Then $\chargroup(T)$ is a rank $r$ $\Gamma$-module.
The association $T \mapsto \chargroup(T)$ is an antiequivalence between the categories of $k$-tori and finitely-generated $\Gamma$-modules; in fact, it is an antiequivalence between the categories of $k$-tori split by a finite Galois extension $E/k$ and finitely-generated $\Gal(E/k)$-modules.
The $\Gamma$-action on $\chargroup(T)$ yields a continuous representation \[\Gamma \to \Aut(\chargroup(T)) \cong \Aut(\Z^r) \cong \GL_r(\Z)\] whose kernel $\mathfrak{h} \ \unlhd \ \Gamma$ corresponds to the minimal splitting field of $T$, a finite Galois extension $E/k$. 
The group $\GL_r(\Z)$ contains the image of this representation, a copy of $\Gamma/\mathfrak{h} \cong \Gal(E/k)$.
Call this the \emph{Galois group of} $T$.
On the other hand, an embedding $\Gal(E/k) \to \GL_r(\Z)$ lifts to a continuous representation $\Gamma \to \GL_r(\Z)$, which determines a $\Gamma$-action on $\chargroup(\G_m^r)$, identifying the rank $r$ $k$-torus $\Spec \left( E[\chargroup(\G_m^r)]^{\Gamma} \right)$ whose Galois group is $\Gal(E/k)$.
Explicitly, \[\begin{array}{rcl}
\{\textup{rank }r\textup{ }k\textup{-tori} \}/{\cong} & \leftrightarrow & \{\textup{rank }r\textup{ }\Gamma\textup{-modules} \}/{\cong} \\
& \leftrightarrow & H^1(k,\Aut(\chargroup(\G_m^r))) \\
& \leftrightarrow & H^1(k,\Aut(\Z^r)) \\
& \leftrightarrow & H^1(k,\GL_r(\Z)) \\
& = & \Hom(\Gamma,\GL_r(\Z))/{\sim}
\end{array} \] where $\rho \sim \rho'$ if and only if $\rho(\Gamma)$ and $\rho'(\Gamma)$ are conjugate in $\GL_r(\Z)$.

To classify rank $r$ tori, it is necessary to count the conjugacy classes of finite subgroups of $\GL_r(\Z)$.
There are 13 such classes in $\GL_2(\Z)$; in \cite{vosk65}, however, Voskresenski{\u\i} gave explicit representations of \emph{15} finite groups in terms of matrix generators along with their associated rank 2 tori.
He later corrected this in a short geometric proof that rank 2 tori are rational \cite{vosk98}; here, he noted that there are only two distinct maximal finite subgroups of $\GL_2(\Z)$ up to conjugacy, $D_4$ and $D_6$, whereas he produced two faithful representations of each of these groups in $\GL_2(\Z)$ in his earlier classification paper.
For the convenience of the cross-referencing reader, the proof of Theorem \ref{mainthm} will follow his original classification.

\section{Lemmata}

In order to prove Theorem \ref{mainthm}, a number of key lemmas will be cited repeatedly.

\begin{lem}
\label{serres}
Totaro's question for $\ind(X) = 1$ has a positive answer for tori.
\end{lem}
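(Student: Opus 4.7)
The plan is to reduce this statement to the standard divisibility $\per(X) \mid \ind(X)$ for torsors under commutative algebraic groups, which the author has already flagged in the introduction as the reason Serre's question is trivial for abelian $G$. Once that divisibility is in hand, the lemma is immediate: if $\ind(X) = 1$, then $\per(X) = 1$, so $X$ represents the trivial class in the abelian group $H^1(k,T)$.

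So the only real content is establishing $\per(X) \mid \ind(X)$ when $T$ is a torus. First I would note that, since $T$ is commutative, $H^1(k,T)$ carries a natural abelian group structure and admits corestriction maps $\textup{cor}_{L/k}\colon H^1(L,T_L) \to H^1(k,T)$ for every finite separable extension $L/k$. Next, I would invoke the standard identity $\textup{cor}_{L/k}\circ\textup{res}_{L/k} = [L:k]\cdot\id$ on $H^1(k,T)$. Applying this to a separable splitting extension $L/k$ of $X$ (that is, one with $X_L = 1 \in H^1(L,T_L)$) gives
\[
[L:k]\cdot[X] \;=\; \textup{cor}_{L/k}\bigl(\textup{res}_{L/k}[X]\bigr) \;=\; \textup{cor}_{L/k}(1) \;=\; 1,
\]
so $\per(X)$ divides $[L:k]$. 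Taking the gcd over all such extensions and invoking the Gabber--Liu--Lorenzini result $\ind(X) = \ind_\textup{s}(X)$ already cited in the introduction yields $\per(X) \mid \ind(X)$.

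Two small points warrant care. First, one must ensure that separable splitting extensions suffice when computing the index; this is where the equality $\ind(X) = \ind_\textup{s}(X)$ from Gabber--Liu--Lorenzini (or, alternatively, the fact that $T$-torsors are smooth and admit separable splitting fields by descent) enters. Second, the corestriction-restriction identity should be invoked in its étale/fppf cohomological form, valid for any commutative smooth group scheme, so that no characteristic assumption on $k$ is needed; this matches the author's later remark that Theorem \ref{mainthm} is independent of perfection of the base. I do not anticipate a serious obstacle: the proof is essentially a one-line consequence of commutativity of $T$ together with the standard norm/corestriction formalism.
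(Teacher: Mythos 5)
Your proof is correct and takes essentially the same approach as the paper: establish $\per(X)\mid\ind(X)$ via the corestriction--restriction identity, then conclude $\per(X)=1$ from $\ind(X)=1$, so $X$ is the trivial class and $F=k$ works. The only cosmetic difference is that you route through separable extensions and invoke Gabber--Liu--Lorenzini to pass to the full index, whereas the paper asserts the restriction--corestriction identity directly for an arbitrary finite field extension; either way the argument is the same one-liner exploiting commutativity of $T$.
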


\begin{proof}
If $T$ is a torus, then by a well-known fact from Galois cohomology, the composition of the natural restriction and corestriction maps associated to any finite field extension $L/k$ \[ H^1(k,T) \xrightarrow{\textup{res}} H^1(L,T_L) \xrightarrow{\textup{cores}} H^1(k,T) \] is the multiplication-by-$[L:k]$ map.
Now, fix $X \in H^1(k,T)$. 
If $X_L = 0 \in H^1(L,T_L)$ for some finite field extension $L/k$, then \[ [L:k]X = (\textup{cores} \circ \textup{res})(X) = \textup{cores}(0) = 0 \in H^1(k,T),\] and so $\per(X) \mid [L:k]$.
Since $L$ is arbitrary, $\per(X) \mid \ind(X)$.
But $\ind(X) = 1$.
Then $\per(X) = 1$, meaning that $X = 0 \in H^1(k,T)$.
So it suffices to take $F = k$, as desired.
\end{proof}

\begin{lem}
\label{normtori} Let $L/k$ be a finite separable field extension and $T = R_{L/k}^{(1)}\G_m$.
\begin{enumerate}[{\normalfont(a)}]
\item
$H^1(k, T) \cong k^\times/N_{L/k}(L^\times)$. 
\item 
If $L/k$ is cyclic, then $H^1(k, T) \cong \Br(L/k)$.
\item
$H^1(L,T_L) = 0$. In particular, $\ind(X) \mid [L:k]$ for all $X \in H^1(k,T)$.
\end{enumerate}
\end{lem}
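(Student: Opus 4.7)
The plan is to derive all three statements from a single short exact sequence, namely the defining sequence of the norm torus
\[ 1 \to T \to R_{L/k}\G_m \xrightarrow{N_{L/k}} \G_m \to 1, \]
combined with Shapiro's lemma and Hilbert 90.

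For (a), I would take Galois cohomology of the above sequence. Shapiro's lemma identifies $H^i(k, R_{L/k}\G_m)$ with $H^i(L, \G_m)$, and Hilbert 90 gives $H^1(L, \G_m) = 0$. On $H^0$ the connecting map is just the field-theoretic norm $N_{L/k} \colon L^\times \to k^\times$. The long exact sequence therefore collapses to
\[ L^\times \xrightarrow{N_{L/k}} k^\times \to H^1(k,T) \to 0, \]
yielding $H^1(k,T) \cong k^\times/N_{L/k}(L^\times)$.

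For (b), when $L/k$ is cyclic with Galois group $\Gamma = \Gal(L/k)$, the Tate cohomology of the $\Gamma$-module $L^\times$ is $2$-periodic. Thus
\[ k^\times/N_{L/k}(L^\times) = \hat{H}^0(\Gamma, L^\times) \cong \hat{H}^2(\Gamma, L^\times) = H^2(\Gamma, L^\times) = \Br(L/k), \]
where the last equality is the standard interpretation of relative Brauer groups.

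For (c), I would base-change the short exact sequence to $L$ and again take cohomology. The subtle point — and what I would expect to be the main obstacle — is identifying $(R_{L/k}\G_m)_L$: one has $(R_{L/k}\G_m)_L = R_{(L\otimes_k L)/L}\G_m$, and when $L/k$ is separable but not Galois, $L \otimes_k L$ is not a field. However, separability guarantees $L \otimes_k L$ is a finite étale $L$-algebra, hence a product of finite separable field extensions of $L$. Shapiro's lemma applied factor-by-factor, together with Hilbert 90 on each factor, gives $H^1(L, (R_{L/k}\G_m)_L) = 0$. Moreover, the multiplication map $L \otimes_k L \to L$ splits the inclusion $L \hookrightarrow L \otimes_k L$, so the norm $(L \otimes_k L)^\times \to L^\times$ is surjective. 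The long exact sequence then forces $H^1(L, T_L) = 0$. The final clause follows immediately: for any $X \in H^1(k,T)$, the class $X_L$ lies in $H^1(L,T_L) = 0$, so $X$ is split by $L$ and therefore $\ind(X) \mid [L:k]$.
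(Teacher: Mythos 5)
Your proof is correct and follows the same overall strategy as the paper: take Galois cohomology of the defining short exact sequence $1 \to T \to R_{L/k}\G_m \to \G_m \to 1$, invoke Shapiro and Hilbert~90 for (a), identify the cokernel of the norm with a relative Brauer group for (b), and for (c) base-change to $L$ and show the norm $(L \otimes_k L)^\times \to L^\times$ is surjective. The only real deviations are cosmetic. In (b) you appeal to the $2$-periodicity of Tate cohomology $\hat{H}^0(\Gamma, L^\times) \cong \hat{H}^2(\Gamma, L^\times) = \Br(L/k)$, whereas the paper writes down the explicit isomorphism $\gamma \mapsto (L/k,\sigma,\gamma)$ via cyclic algebras; these are two phrasings of the same fact, with the paper's version being more concrete and useful later when classes need to be manipulated as algebras. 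In (c) you obtain the decomposition $L \otimes_k L \cong L \times A$ from the multiplication-map retraction splitting $L \hookrightarrow L \otimes_k L$, whereas the paper writes $L \cong k[x]/(p(x))$, factors $p(x)$ over $L$, and applies the Chinese Remainder Theorem; again this is the same decomposition obtained by a cleaner route, and in both cases surjectivity of the norm follows because the tuple $(\lambda,1,\ldots,1)$ has norm $\lambda$. Your step from ``the retraction exists'' to ``the norm is surjective'' is compressed but sound.
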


\begin{proof}
From the short exact sequence of $k$-tori \[ 1 \to R_{L/k}^{(1)}\G_m \to R_{L/k}\G_m \xrightarrow{N_{L/k}} \G_m \to 1, \] taking Galois cohomology yields the long exact sequence of groups \[ L^\times \xrightarrow{N_{L/k}} k^\times \to H^1(k,T) \to 1,\] from which (a) is clear.
Now, for any finite cyclic field extension $L/k$ with $\Gal(L/k) \cong \langle \sigma \rangle$, \[k^\times/N_{L/k}(L^\times) \cong \Br(L/k)\] via the map \[ \gamma \mapsto (L/k,\sigma,\gamma)\] where $(L/k,\sigma,\gamma)$ is the cyclic algebra generated over $L$ by $u$ with relations $ux = \sigma(x)u$ for any $x \in L$ and $u^{[L:k]} = \gamma$.
From this, (b) follows immediately.
Finally, if $L \cong k[x]/\left( p(x) \right)$ and $a_1,\ldots,a_m$ are the roots of $p(x)$ in $L$, then \[p(x) = q(x)\prod\limits_{i=1}^m (x-a_i) \] for some $q(x) \in L[x]$.
By the Chinese Remainder Theorem, \[ \begin{array}{rcl}
L \otimes_k L & \cong & L \otimes_k k[x]/\left( p(x) \right) \\
& \cong &  L[x]/\left( q(x) \right) \times \prod\limits_{i=1}^m L[x]/\left( x-a_i \right) \\
& \cong & L \times A
\end{array}
\] where $A/L$ is a finite \etale algebra.
So the following diagram commutes.
\[
\begin{tikzpicture}[node distance = 2cm, auto]
\node (C) {$L$};
\node (A) [above of = C, left of = C] {$L \otimes_k L$};
\node (B) [above of = C, right of = C] {$L \times A$};

\draw[->] (A) to node {$\sim$} (B);
\draw[->] (A) to node [swap] {$N_{L \otimes_k L/L}$} (C);
\draw[->] (B) to node {$\id \cdot N_{A/L}$} (C);
\end{tikzpicture}
\]
In particular, $N_{L \otimes_k L/L}$ is surjective since \[ (\id \cdot N_{A/L})(\lambda,1,\ldots,1) = \lambda \] for any $\lambda \in L$.
Then \[ H^1(L,T_L) \cong L^\times/N_{L \otimes_k L/L}\left((L \otimes_k L)^\times\right) = 0,\] hence (c).
\end{proof}

\begin{lem}
\label{minsplit}
Let $T$ be a $k$-torus with a (not necessarily minimal) splitting field $E$ of finite degree over $k$, and let $X \in H^1(k,T)$.
\begin{enumerate}[{\normalfont (a)}]
\item $\ind(X) \mid [E:k]$.
\item If $[E:k]$ is prime, then Totaro's question has an positive answer for $T$.
\end{enumerate}
\end{lem}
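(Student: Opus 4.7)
For part (a), the plan is to invoke Hilbert 90. Since $E$ splits $T$, base change produces $T_E \cong \G_{m,E}^r$, and Hilbert 90 makes $H^1(E,T_E) \cong H^1(E,\G_{m,E})^r$ vanish. Hence $X_E$ is the trivial torsor, so $X(E) \neq \varnothing$, which yields a zero-cycle on $X$ of degree $[E:k]$ and proves $\ind(X) \mid [E:k]$.

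For part (b), the strategy is to combine part (a) with Lemma \ref{serres} to collapse the possibilities for $\ind(X)$ into two easy cases. With $[E:k] = p$ prime, part (a) forces $\ind(X) \in \{1,p\}$. The case $\ind(X) = 1$ is handled immediately by Lemma \ref{serres}, taking $F = k$. For $\ind(X) = p$ one needs to produce a \emph{separable} extension of $k$ of degree $p$ splitting $X$. Here I would appeal to the structure theory recalled in Section 2: every $k$-torus admits a minimal splitting field $E_0$ that is a finite Galois (hence separable) extension of $k$, and any splitting field of $T$ contains $E_0$. So $E_0 \subseteq E$ and $[E_0:k] \mid p$, giving $[E_0:k] \in \{1,p\}$. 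The possibility $E_0 = k$ would make $T$ split already over $k$, so Hilbert 90 would give $H^1(k,T) = 0$ and hence $\ind(X) = 1$, a contradiction. Therefore $[E_0:k] = p$, and $F := E_0$ is the desired extension: the same Hilbert 90 argument as in (a) gives $X_F = 0$.

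The only point requiring any care is the fact that the given $E$ need not be separable over $k$; this is sidestepped by the observation that $E$ serves only as an auxiliary upper bound on the index, while the actual extension exhibited in (b) is taken to be the minimal splitting field $E_0$, which is automatically separable. Everything else reduces to Hilbert 90 together with Lemma \ref{serres}.
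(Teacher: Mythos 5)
Your proof is correct and follows essentially the same route as the paper: Hilbert 90 kills $H^1(E,T_E)$ to get part (a), and part (b) splits into the cases $\ind(X)=1$ (handled by Lemma \ref{serres}) and $\ind(X)=[E:k]$. The one refinement you add, and it is a real one, is taking $F=E_0$ (the minimal Galois splitting field) rather than $F=E$, which guards against the possibility that the given $E$ is inseparable; the paper simply writes ``$F=E$ suffices,'' which strictly speaking presupposes $E$ separable, though in all of the paper's applications $E$ is Galois so the issue never arises, and moreover (as your analysis implicitly shows) an inseparable $E$ of prime degree would force $E_0=k$ and hence $\ind(X)=1$ anyway.
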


\begin{proof}
Since $T_E$ is split, $H^1(E,T_E) = 0$ by Hilbert 90.
Then $\ind(X) \mid [E:k]$.
If $[E:k]$ is prime, then by (a), $\ind(X) = 1$ or $[E:k]$, hence either $F = k$ or $E$ suffices, respectively.
\end{proof}

For any finite extension of \etale algebras $A/B$, let $(A^\times)_{B}^{(1)} := \{a \in A^\times : N_{A/B}(a) = 1 \}$.

\begin{lem}
\label{mntori}
Consider the following diagram of separable field extensions
\[
\begin{tikzpicture}[node distance = 0.8cm, auto]
\node (L) {$L$};
\node (K1) [below of = L, below of = L, left of = L, left of = L] {$K_1$};
\node (K2) [below of = L, below of = L, below of = L, right of = L, right of = L] {$K_2$};
\node (k) [below of = L, node distance = 4cm] {$k$};

\draw[-] (L) to node [swap] {$m$} (K1);
\draw[densely dotted] (L) to node {$n$} (K2);
\draw[densely dotted] (k) to node {$n$} (K1);
\draw[-] (k) to node [swap] {$m$} (K2);
\end{tikzpicture}
\]
for some $m,n > 1$, and let $T = R_{K_1/k}\left(R_{L/K_1}^{(1)}\G_m\right) \cap R_{K_2/k}\left(R_{L/K_2}^{(1)}\G_m\right)$.

\begin{enumerate}[{\normalfont (a)}]
\item
The following sequences of $k$-tori are exact.
\[1 \to T \to R_{K_1/k}(R_{L/K_1}^{(1)}\G_m) \xrightarrow{N_{L/K_2}} R_{K_2/k}^{(1)}\G_m \to 1\]
\[1 \to T \to R_{K_2/k}(R_{L/K_2}^{(1)}\G_m) \xrightarrow{N_{L/K_1}} R_{K_1/k}^{(1)}\G_m \to 1\]

\item
The following sequences of abelian groups are exact.
\[(L^\times)_{K_1}^{(1)} \xrightarrow{N_{L/K_2}} (K_2^\times)_{k}^{(1)} \to H^1(k,T) \xrightarrow{\delta_1} K_1^\times/N_{L/K_1}(L^\times)\]
\[(L^\times)_{K_2}^{(1)} \xrightarrow{N_{L/K_1}} (K_1^\times)_{k}^{(1)} \to H^1(k,T) \xrightarrow{\delta_2} K_2^\times/N_{L/K_2}(L^\times)\]
\end{enumerate}
\end{lem}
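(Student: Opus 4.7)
The plan has two steps: prove (a) by constructing and analyzing the norm map, then derive (b) by taking Galois cohomology and invoking Shapiro's lemma together with Lemma \ref{normtori}(a).

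For (a), let $A = R_{K_1/k}(R_{L/K_1}^{(1)}\G_m)$ and $B = R_{K_2/k}(R_{L/K_2}^{(1)}\G_m)$, viewed inside the common ambient torus $R_{L/k}\G_m$ via the transitivity of Weil restriction. Checking on $R$-points for any $k$-algebra $R$: an element $\alpha \in A(R) \subseteq (L\otimes_k R)^\times$ satisfies $N_{L/K_1}(\alpha)=1$, and the factorization $N_{L/k} = N_{K_1/k}\circ N_{L/K_1} = N_{K_2/k}\circ N_{L/K_2}$ forces $N_{K_2/k}(N_{L/K_2}(\alpha))=1$. Hence $N_{L/K_2}$ restricts to a homomorphism $A \to R_{K_2/k}^{(1)}\G_m$, and its kernel is $A\cap \ker(N_{L/K_2}) = A\cap B = T$.

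Surjectivity is the key step. I would verify it by dualizing: a map of $k$-tori is surjective if and only if the induced map of character $\Gamma$-modules is injective. With $\Gamma_L, \Gamma_{K_1}, \Gamma_{K_2}$ denoting the absolute Galois groups of $L, K_1, K_2$, one has
\[\chargroup(R_{K_2/k}^{(1)}\G_m) \;=\; \Z[\Gamma/\Gamma_{K_2}]/\Z, \qquad \chargroup(A) \;=\; \Z[\Gamma/\Gamma_L]/\Z[\Gamma/\Gamma_{K_1}],\]
and the induced map is that arising from the covariant $\Z[\Gamma/\Gamma_{K_2}] \to \Z[\Gamma/\Gamma_L]$ sending each coset to the sum of the $\Gamma_L$-cosets refining it. Injectivity on the displayed quotients is then a direct check with coset representatives. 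The second short exact sequence is obtained by swapping the roles of $K_1$ and $K_2$.

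For (b), apply Galois cohomology to the first sequence of (a). Shapiro's lemma for Weil restriction yields $A(k) = (L^\times)_{K_1}^{(1)}$, $R_{K_2/k}^{(1)}\G_m(k) = (K_2^\times)_k^{(1)}$, and $H^1(k,A) \cong H^1(K_1, R_{L/K_1}^{(1)}\G_m)$, the latter equalling $K_1^\times/N_{L/K_1}(L^\times)$ by Lemma \ref{normtori}(a). Splicing these identifications into the long exact cohomology sequence produces the first sequence in (b); the second follows by symmetry. The main obstacle is surjectivity in (a) --- everything else is essentially a bookkeeping exercise with Shapiro's lemma and Lemma \ref{normtori}(a) --- and the cleanest route, as sketched, is via the injectivity of the dual map on character lattices.
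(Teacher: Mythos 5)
Your proposal is correct and its argument for surjectivity in (a) is genuinely different from the paper's. The paper base-changes to $k^{\textup{s}}$, identifies $L\otimes_k k^{\textup{s}}\cong(k^{\textup{s}})^{mn}$ with a grid indexed so that $N_{L/K_1}$ becomes a product over rows and $N_{L/K_2}$ a product over columns, and then exhibits an explicit preimage of any norm-one $m$-tuple by placing it along one column and filling the rest with $1$'s; you instead pass to character lattices, identify $\chargroup(A)=\Z[\Gamma/\Gamma_L]/\Z[\Gamma/\Gamma_{K_1}]$ and $\chargroup(R_{K_2/k}^{(1)}\G_m)=\Z[\Gamma/\Gamma_{K_2}]/\Z$, and reduce surjectivity to injectivity of the induced trace map on these quotients. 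Both routes are valid and, on inspection, reduce to the same combinatorial fact: each of them silently uses that the $L$-embeddings form a grid over the $K_1$- and $K_2$-embeddings, i.e.\ that $K_1K_2=L$. (For the paper this is needed to label the $mn$ coordinates coherently so that $\Phi$ and $\Psi$ are simultaneously the two norms; for your version it is what makes the ``direct check with coset representatives'' go through, since otherwise $\Z[\Gamma/\Gamma_{K_1}]$ and $\Z[\Gamma/\Gamma_{K_2}]$ can have common image strictly larger than $\Z$.) This hypothesis is not written into the statement, but it holds automatically whenever $\gcd(m,n)=1$, which is the only regime in which the lemma is actually invoked. Your treatment of (b) via Shapiro's lemma and Lemma \ref{normtori}(a) is exactly the paper's. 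The main thing your write-up could make sharper is to actually carry out the coset-representative check, since it is the crux and is where the implicit grid hypothesis enters; as it stands you assert it in one sentence, which hides the only nontrivial content of the argument.
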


\begin{proof}
Left exactness of both sequences is clear from the construction of $T$, so proving (a) amounts to showing that $N_{L/K_2}$ and $N_{L/K_1}$ are surjective after extending scalars to $k^{\textup{s}}$.
If $\Phi : (k^{\textup{s}})^{mn} \to (k^{\textup{s}})^n$ and $\Psi : (k^{\textup{s}})^{mn} \to (k^{\textup{s}})^m$ are the maps defined by \[ \Phi(x_{ij} : 1 \leq i \leq m, 1 \leq j \leq n) = \left( \prod\limits_{i = 1}^{m}x_{i1},\ldots, \prod\limits_{i = 1}^{m}x_{in} \right) \] and \[ \Psi(x_{ij} : 1 \leq i \leq m, 1 \leq j \leq n) =   \left( \prod\limits_{j = 1}^{n}x_{1j},\ldots, \prod\limits_{j = 1}^{n}x_{mj} \right),\] then the following diagram commutes.
\[
\begin{tikzpicture}[node distance = 0.65cm, auto]
\node (L) {$L$};
\node (K1) [below of = L, below of = L, left of = L, left of = L, left of = L, left of = L, left of = L, left of = L, left of = L] {$K_1$};
\node (K2) [below of = L, below of = L,below of = L, right of = L, right of = L, right of = L, right of = L, right of = L, right of = L] {$K_2$};
\node (k) [left of = L, below of = L, below of = L, below of = L, below of = L, below of = L] {$k$};

\draw[->] (L) to node [swap] {$N_{L/K_1}$} (K1);
\draw[->] (L) to node {$N_{L/K_2}$} (K2);
\draw[<-] (k) to node {$N_{K_1/k}$} (K1);
\draw[<-] (k) to node [swap] {$N_{K_2/k}$} (K2);

\node (F) [above of = L, above of = L, above of = L, above of = L, above of = L, above of = L] {$(k^{\textup{s}})^{mn}$};
\node (M1) [below of = F, below of = F, left of = F, left of = F, left of = F, left of = F, left of = F, left of = F, left of = F] {$(k^{\textup{s}})^n$};
\node (M2) [below of = F, below of = F, below of = F, right of = F, right of = F, right of = F, right of = F, right of = F, right of = F] {$(k^{\textup{s}})^m$};
\node (l) [left of = F, below of = F, below of = F, below of = F, below of = F, below of = F] {$k^{\textup{s}}$};

\draw[->] (F) to node [swap] {$\Phi$} (M1);
\draw[->] (F) to node {$\Psi$} (M2);
\draw[<-] (l) to node {$N_{(k^\textup{s})^n/k^\textup{s}}$} (M1);
\draw[<-] (l) to node [swap] {$N_{(k^\textup{s})^m/k^\textup{s}}$} (M2);

\draw[->] (L) to node [swap] {$\otimes_k k^{\textup{s}}$} (F);
\draw[->] (K1) to node {$\otimes_k k^{\textup{s}}$} (M1);
\draw[->] (K2) to node [swap] {$\otimes_k k^{\textup{s}}$} (M2);
\draw[->] (k) to node [swap] {$\otimes_k k^{\textup{s}}$} (l);
\end{tikzpicture}
\]
Any $a \in (R_{K_2 \otimes_k k^{\textup{s}}/k^{\textup{s}}}^{(1)}\G_m)(k^{\textup{s}})$ then corresponds to an $m$-tuple $(a_1,\ldots,a_m) \in (k^{\textup{s}})^m$ such that $\prod\limits_{i = 1}^m a_i = 1$.
But $\Psi$ is surjective: if $x_{ij} = a_i$ when $j = 1$ and $x_{ij} = 1$ otherwise, then \[\begin{array}{rcl} \Psi(x_{ij} : 1 \leq i \leq m, 1 \leq j \leq n) & = & \Psi(a_1,\underbrace{1,\ldots,1}_\text{$n-1$ times},a_2,\underbrace{1,\ldots,1}_\text{$n-1$ times},\ldots,a_m,\underbrace{1,\ldots,1}_\text{$n-1$ times}) \\
& = & (a_1,\ldots,a_m),
\end{array}\] and in fact, \[\begin{array}{rcl} \Phi(x_{ij} : 1 \leq i \leq m, 1 \leq j \leq n) & = & \Phi(a_1,\underbrace{1,\ldots,1}_\text{$n-1$ times},a_2,\underbrace{1,\ldots,1}_\text{$n-1$ times},\ldots,a_m,\underbrace{1,\ldots,1}_\text{$n-1$ times}) \\
& = & (\underbrace{1,\ldots,1}_\text{$n$ times}).
\end{array}\] 
So this $mn$-tuple yields a $k^{\textup{s}}$-point of $R_{L \otimes_k k^{\textup{s}}/K_1 \otimes_k k^{\textup{s}}}^{(1)}\G_m$ mapping to $a \in R_{K_2 \otimes_k k^\textup{s}/k^\textup{s}}^{(1)}(k^{\textup{s}})$.
Then $N_{L/K_2}$ is surjective as a map of algebraic groups.
By a symmetric argument, $N_{L/K_1}$ is surjective too, proving (a).
(b) follows by taking Galois cohomology of these short exact sequences of $k$-tori and applying Lemma \ref{normtori}.
\end{proof}

\section{Technical Results} 
Two technical propositions are needed for the proof of Theorem \ref{mainthm}.

\begin{prop}
\label{z4tori}
Let $L/K/k$ be a tower of separable quadratic extensions with no intermediate fields between $k$ and $L$ other than $K$, and let \[T = R_{K/k}(R_{L/K}^{(1)}\G_m).\]
Then Totaro's question has a positive answer for $T$.
\end{prop}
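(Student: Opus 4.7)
The plan is to combine Shapiro's lemma (identifying $H^1(k, T) \cong K^\times/N_{L/K}(L^\times)$ via Lemma~\ref{normtori}(a)) with a short exact sequence that expresses $T$ as a kernel. The hypothesis on $L/K/k$ permits two Galois-theoretic scenarios for $L/k$: Galois cyclic of order $4$ (with $L$ itself the minimal splitting field of $T$, of degree $4$), or non-Galois with Galois closure of type $D_4$ (with minimal splitting field $M$ of degree $8$). I will sketch the cyclic case; the $D_4$ case proceeds analogously after replacing Hilbert~90 for $L/k$ by descent through $M/k$.

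For $X = [a] \in K^\times/N_{L/K}(L^\times)$, Lemma~\ref{minsplit}(a) gives $\ind(X) \in \{1, 2, 4\}$ in the cyclic case. The cases $\ind(X) = 1$ (Lemma~\ref{serres}) and $\ind(X) = 4$ (take $F = L$) are immediate. For the crucial case $\ind(X) = 2$, the divisibility $\per(X) \mid \ind(X)$ gives $a^2 = N_{L/K}(b)$ for some $b \in L^\times$. The main tool is the exact sequence of $k$-tori
\[
1 \to T \to R_{L/k}^{(1)}\G_m \to R_{K/k}^{(1)}\G_m \to 1,
\]
derived by realizing $T$ as a kernel via transitivity of Weil restriction. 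Taking cohomology sends $X$ to $Y := [N_{K/k}(a)] \in k^\times/N_{L/k}(L^\times)$, with $2Y = 0$ (since $N_{K/k}(a)^2 = N_{L/k}(b)$), and functoriality gives $\ind(Y) \mid \ind(X) = 2$.

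The decisive step: if $Y \neq 0$ then $Y$ represents a non-trivial quaternion class in $\Br(L/k)$, and $L$ splits such a quaternion iff it contains one of its maximal subfields. Since $K$ is the unique quadratic subextension of $L/k$, that subfield must be $K$, whence $Y_K = 0$. Translating through Shapiro over $K$ gives $a\rho(a) \in N_{L/K}(L^\times)$, where $\rho$ generates $\Gal(K/k)$. Combined with $a^2 \in N_{L/K}(L^\times)$, this forces $a/\rho(a) \in N_{L/K}(L^\times)$. Writing $a/\rho(a) = N_{L/K}(t)$, one computes $N_{L/k}(t) = 1$, so Hilbert~90 for $L/k$ cyclic produces $t = s/\tau(s)$; setting $w := N_{L/K}(s)$ yields $a/\rho(a) = w/\rho(w)$, which rearranges to $\rho(a/w) = a/w$. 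Hence $c := a/w \in k^\times$, and $[a] = [c]$ in $K^\times/N_{L/K}(L^\times)$ because $w \in N_{L/K}(L^\times)$.

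The separable quadratic field $F := k(\sqrt c)$ is then the desired splitting field (necessarily $c \notin (k^\times)^2$, else $c \in N_{L/K}(L^\times)$ and $X = 0$, contradicting $\ind(X) = 2$), because $c = N_{LF/KF}(\sqrt c)$ with $\sigma \in \Gal(L/K)$ fixing $\sqrt c \in F$. The main obstacle is the descent relation $a\rho(a) \in N_{L/K}(L^\times)$, obtained from the index hypothesis via the subfield-uniqueness trick; without this, there is no direct way to reduce $X$ to an explicit element of $k^\times$.
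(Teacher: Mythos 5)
Your reduction to $\ind(X)=2$ and the algebraic manipulations at the end (passing from $a\rho(a)\in N_{L/K}(L^\times)$ and $a^2\in N_{L/K}(L^\times)$ to $a/\rho(a)\in N_{L/K}(L^\times)$, then via Hilbert 90 to $a = cw$ with $c\in k^\times$ and $w$ a norm) are all correct, and the exact sequence $1\to T\to R_{L/k}^{(1)}\G_m\to R_{K/k}^{(1)}\G_m\to 1$ together with the identification of the induced map on $H^1$ as $[a]\mapsto[N_{K/k}(a)]$ is a nice reformulation. But the \emph{decisive step} is wrong as stated, and it is not a repairable slip: the claim that a quaternion algebra split by a quartic field $L$ must have a maximal subfield contained in $L$ is false. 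Take $k=\Q$, $L=\Q(\zeta_5)$ (cyclic quartic, unique intermediate field $K=\Q(\sqrt{5})$), and $Q=(-1,-1)_\Q$. Then $Q$ is ramified exactly at $2$ and $\infty$; $L$ is totally imaginary and $2$ is inert in $L$, so $Q_L$ is split, yet $Q_K$ is \emph{not} split because $K$ has real places. So ``$L$ splits $Q$'' does not force $K$ to split $Q$, and your deduction $Y_K = 0$ has no justification. Without $Y_K=0$ the whole chain $a\rho(a)\in N_{L/K}(L^\times)\Rightarrow a/\rho(a)\in N_{L/K}(L^\times)\Rightarrow a\equiv c\bmod N_{L/K}(L^\times)$ with $c\in k^\times$ collapses. (Of course this does not by itself give a counterexample to the Proposition -- one would have to realize such a $Q$ as the image $Y$ of some index-2 class $X$ -- but it shows the logic of your argument is broken, not merely that a detail needs polishing.)

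There are two further issues. First, you assert the non-Galois $D_4$ case is ``analogous,'' but there $L/k$ is not cyclic, Lemma \ref{normtori}(b) does not apply to $L/k$, and the element $Y$ does not live in $\Br(L/k)$ in the same way, so the quaternion reinterpretation would need a separate argument through the Galois closure $M$; this is not a routine substitution. Second, your final step $F=k(\sqrt c)$ silently assumes $\ch(k)\neq 2$; the paper treats characteristic $2$ with Artin--Schreier extensions and a modified norm form. For comparison, the paper's proof of the $\ind(X)=2$ case avoids the Brauer-group detour entirely: it invokes Gabber--Liu--Lorenzini (Theorem 9.2) to reduce to a tower $E'/E/k$ with $[E':E]=2$ and $[E:k]$ odd, expands the norm equation $\beta = N_{L\otimes_k E'/K\otimes_k E'}(u_0+v_0 i)$ into a pair of $k$-quadratic forms $Q^1,Q^2$, applies Springer's theorem over the odd-degree extension $E$ to descend isotropy of $Q^2$ to $k$, and reads off an explicit $\alpha\in k^\times$ with $X_{k(\sqrt\alpha)}=0$. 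That quadratic-form mechanism is precisely what produces the splitting quadratic extension without ever needing the false ``$L$ splits a quaternion iff it contains a maximal subfield'' principle.
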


\begin{proof}
Let $M$ be the Galois closure of $L/k$ in $k^{\textup{s}}$ and $G = \Gal(M/k)$.
Either $M = L$, in which case $G \cong \Z/4\Z$, or $[M:L] = 2$, in which case $G \cong D_4$.
Suppose that $M = L$. Then \[ K \otimes_k L \cong L \times L \] as $K \subseteq L$, $[K:k] = 2$, and $K/k$ is separable, and \[ L \otimes_k L \cong (L \times L) \times (L \times L) \] as $[L:k] = 4$ and $L/k$ is Galois.
So the following diagram commutes.
\[
\begin{tikzpicture}[node distance = 2cm, auto]
\node (C) {$K \otimes_{k} L$};
\node (D) [right of = C, right of = C] {$L \times L$};
\node (A) [above of = C] {$L \otimes_k L$};
\node (B) [right of = A, right of = A] {$(L \times L) \times (L \times L)$};

\draw[->] (A) to node {$\sim$} (B);
\draw[->] (A) to node [swap] {$N_{L \otimes_k L/K \otimes_k L}$} (C);
\draw[->] (B) to node {$N_{L \times L/L} \times N_{L \times L/L}$} (D);
\draw[->] (C) to node {$\sim$} (D);
\end{tikzpicture}
\]
Since $N_{L \times L/L} \times N_{L \times L/L}$ is surjective, so is $N_{L \otimes_k L/K \otimes_k L}$, and so by Lemma \ref{normtori}.(a), \[ H^1(L,T_L) \cong (K \otimes_k L)^\times/N_{L \otimes_k L/K \otimes_k L}\left( (L \otimes_k L)^\times \right) = 0.\]
If $[M:L] = 2$, then since $D_4$ contains three distinct subgroups of order 2, there is another tower of separable extensions $M/L'/k$ such that $[M:L'] = 2$, \[K \otimes_k L' \cong M, \] and \[ L \otimes_k L' \cong M \times M.\]
So the following diagram commutes.
\[
\begin{tikzpicture}[node distance = 2cm, auto]
\node (C) {$K \otimes_{k} L'$};
\node (D) [right of = C, right of = C] {$M$};
\node (A) [above of = C] {$L \otimes_k L'$};
\node (B) [right of = A, right of = A] {$M \times M$};

\draw[->] (A) to node {$\sim$} (B);
\draw[->] (A) to node [swap] {$N_{L \otimes_k L'/K \otimes_k L'}$} (C);
\draw[->] (B) to node {$N_{M \times M/M}$} (D);
\draw[->] (C) to node {$\sim$} (D);
\end{tikzpicture}
\]
Since $N_{M \times M/M}$ is surjective, so is $N_{L \otimes_k L'/K \otimes_k L'}$, and so by Lemma \ref{normtori}.(a), \[ H^1(L',T_{L'}) \cong (K \otimes_k L')^\times/N_{L \otimes_k L'/K \otimes_k L'}\left( (L \otimes_k L')^\times \right) = 0.\]
So $\ind(X) \mid 4$ for any $X \in H^1(k,T)$, and if $\ind(X) = 4$, then either $F = L$ or $L'$ will suffice.

Suppose now that $\ind(X) = 2$.
Let $X = [\beta]$ with some $\beta \in K^\times$ that is not a norm from $L^\times$.
Since $\ind(X) = 2$, it can be assumed by Theorem 9.2 from Gabber--Liu--Lorenzini \cite{gll13} using standard Galois theory reductions (cf. Lemma 1.5 from Garibaldi--Hoffman \cite{garihoff06}) that there is a tower of separable field extensions $E'/E/k$ such that $[E':E] = 2$, $[E : k] = m$ for some odd $m$, and \[ \beta \in N_{L \otimes_k E'/K \otimes_k E'}\left((L \otimes_k E')^\times\right).\]
Write \[ 
E' \cong \left\{ \begin{array}{lr} E[x]/(x^2+x+a) & \textup{ if } \ch(k) = 2 \\ 
E[x]/(x^2-a) & \textup{ if } \ch(k) \neq 2 \end{array} \right. 
\] for some $a \in E^\times$.
In both cases, identify the class of $x$ with $i \in E'$.
Then there are $u_0,v_0 \in LE$ not both zero such that \[ 
\begin{array}{rcl}
\beta & = & N_{L \otimes_k E'/K \otimes_k E'}(u_0 + v_0i) \\
& = & \left(N_{LE/KE}(u_0) + aN_{LE/KE}(v_0)\right) + T_{E}(u_0,v_0)i
\end{array}
\] where \[ 
T_K(u,v) = \left\{ \begin{array}{lr} \tr_{L/K}(u\overline{v}) + N_{L/K}(v) & \textup{ if } \ch(k) = 2 \\ 
\tr_{L/K}(u\overline{v}) & \textup{ if } \ch(k) \neq 2 \end{array} \right. 
\]
Since $\beta \in K^\times$, $T_{E}(u_0,v_0) = 0$, and so \[\beta = N_{LE/KE}(u_0) + aN_{LE/KE}(v_0).\]
If $v_0 = 0$, then $\beta = N_{LE/KE}(u_0)$, in which case $\beta \in K^\times$ is represented by the $K$-quadratic form $N_{L/K}$ after extending scalars to $KE$.
But $[KE:K] = [E:k] = m$ is odd.
Then by Springer's Theorem \cite{springer52}, $\beta \in N_{L/K}(L^\times)$, a contradiction.
So $v_0 \neq 0$.

Now, write \[ K \cong \left\{ \begin{array}{lr} k[y]/(y^2+y+b) & \textup{ if } \ch(k) = 2 \\ k[y]/(y^2-b) & \textup{ if } \ch(k) \neq 2 \end{array} \right. \] for some $b \in k^\times$.
In both cases, identify the class of $y$ with $j \in K$.
Then there are $\beta_1, \beta_2 \in k$ not both zero such that \[ \beta = \beta_1 + \beta_2 j .\] 
Let $N^1, N^2 : L \to k$ and $Q^1, Q^2 : L^2 \to k$ be the $k$-quadratic forms defined by \[ N_{L/K} = N^1 + N^2 j, \] \[ Q^1(u,v) = \beta_1 N^1(u)  + b\beta_2 N^2 (u) - N^1 (v), \]
and \[ Q^2(u,v) = \left\{
\begin{array}{lr}
(\beta_1 + \beta_2) N^2 (u) + \beta_2 N^1 (u) + N^2 (v) & \textup{ if } \ch(k) = 2 \\
\beta_1 N^2 (u) + \beta_2 N^1 (u) - N^2 (v) & \textup{ if } \ch(k) \neq 2
\end{array}
\right.
\]
Then setting $x_0 = v_0^{-1}$ and $y_0 = u_0 v_0^{-1}$, \[ 
\begin{array}{rcl}
a & = &  \beta N_{LE/KE}(x_0) - N_{LE/KE}(y_0) \\
& = & (\beta_1 + \beta_2 j) (N_{LE}^1 + N_{LE}^2 j)(x_0) - (N_{LE}^1 + N_{LE}^2 j)(y_0) \\
& = & Q_{E}^1(x_0, y_0) + Q_{E}^2(x_0, y_0)j.
\end{array}
\]
Since $a \in E^\times$, $Q_{E}^1(x_0, y_0) = a$ and $Q_{E}^2(x_0, y_0) = 0$.
Now, case by $\ch(k)$.

First, suppose that $\ch(k) \neq 2$.
Since $\tr_{LE/KE}(y_0) = 0$, the isotropic vector for $Q_E^2$ comes from the subspace \[LE \oplus (LE)^0 \cong (L \oplus L^0) \otimes_k E\] where $L^0 = \ker \tr_{L/K} \subseteq L$.
But as $[E:k] = m$ is odd, $Q^2$ is isotropic by Springer's Theorem \cite{springer52}.
So there is some $(x_1,y_1) \in L \oplus L^0$ such that \[ Q^2(x_1,y_1) = \beta_1 N^2 (x_1) + \beta_2 N^1 (x_1) - N^2(y_1) = 0. \]
If $x_1 = 0$, then $y_1$ is an isotropic vector for $N^2$.
But isotropic quadratic forms are universal.
So for any $x$, there is a $y$ such that $N^2(y) = \beta_1 N^2 (x) + \beta_2 N^1 (x)$, i.e., $Q^2(x,y) = 0.$
Then we can assume that $x_1 \neq 0$.
So \[
\begin{array}{rcl}
\alpha & = & Q^1(x_1,y_1) \\
& = & Q^1(x_1,y_1) + Q^2(x_1,y_1)j \\
& = & \beta N_{L/K}(x_1) - N_{L/K}(y_1)
\end{array}
\] means that \[  N_{L/K}(x_1^{-1})(N_{L/K}(y_1) + \alpha) = \beta.\]
With $F = k(\sqrt{\alpha})$, $[F:k] = 2$, and since $y_1 \in L^0$, \[ N_{L \otimes_k F/K \otimes_k F}\left(\dfrac{y_1 + \sqrt{\alpha}}{x_1}\right) = \beta. \]
Then $X_F = 0 \in H^1(F,T_F)$, as desired.

Now, suppose that $\ch(k) = 2$.
Let $T^1, T^2 : L \to k$ be the $k$-linear maps defined by \[ \tr_{L/K} = T^1 + T^2 j.\]
Since \[
\begin{array}{rcl}
(T_E^1(y_0) + 1) + T_E^2(y_0)j & = & \tr_{LE/KE}(y_0)+1\\ 
& = & \tr_{LE/KE}(u_0v_0^{-1}) + 1 \\
& = & N_{LE/KE}(v_0)\left(\tr_{LE/KE}(u_0\overline{v_0}) + N_{LE/KE}(v_0)\right) \\
& = & 0,
\end{array}
\] $T_E^2(y_0) = 0$, and so the isotropic vector for $Q_E^2$ comes from the subspace \[LE \oplus (LE)^\# \cong (L \oplus L^\#) \otimes_k E\] where $L^\# = \ker T^2 \subseteq L$.
But as $[E:k] = m$ is odd, $Q^2$ is isotropic by Springer's Theorem \cite{springer52}.
So there is some $(x_1,y_1) \in L \oplus L^\#$ such that \[ Q^2(x_1,y_1) = (\beta_1 + \beta_2) N^2 (x_1) + \beta_2 N^1 (x_1) + N^2 (y_1)= 0. \]
If $x_1 = 0$, then $y_1$ is an isotropic vector for $N^2$.
But the symmetric bilinear form \[ b_{N^2} : L^2 \to k \] defined by \[b_{N^2}(x,y) := N^2(x+y) - N^2(x) - N^2(y) = T^2(x\overline{y})\] is non-degenerate.
Then $N^2$ is regular and isotropic, hence universal \cite{ekm08}.
So as before, we can assume that $x_1 \neq 0$.
Let $\gamma = T^1(y_1)$.
If $\gamma = 0$, then $y_1 = 0$ as $y_1 \in L^\#$.
Setting $\alpha = Q^1(x_1, 0)$ and $F = k[z]/(z^2 + z + \alpha)$ and identifying the class of $z$ with $\lambda \in F$ yields that \[N_{L \otimes_k F/K \otimes_k F}\left(\dfrac{\lambda}{x_1}\right) = \beta.\] 
If $\gamma \neq 0$, then \[
N_{L \otimes_k F/K \otimes_k F}\left(\dfrac{y_1 + \gamma\lambda}{\gamma x_1} \right) = \beta.
\]
In both cases, $[F:k] = 2$ and $X_F = 0 \in H^1(F,T_F)$, as desired.
\end{proof}

\begin{prop}
\label{mnprop}
Consider the following diagram of separable field extensions
\[
\begin{tikzpicture}[node distance = 0.8cm, auto]
\node (L) {$L$};
\node (K1) [below of = L, below of = L, left of = L, left of = L] {$K_1$};
\node (K2) [below of = L, below of = L, below of = L, right of = L, right of = L] {$K_2$};
\node (k) [below of = L, node distance = 4cm] {$k$};

\draw[-] (L) to node [swap] {$m$} (K1);
\draw[densely dotted] (L) to node {$n$} (K2);
\draw[densely dotted] (k) to node {$n$} (K1);
\draw[-] (k) to node [swap] {$m$} (K2);
\end{tikzpicture}
\]
for some coprime $m, n > 1$, and let \[T = R_{K_1/k}\left(R_{L/K_1}^{(1)}\G_m\right) \cap R_{K_2/k}\left(R_{L/K_2}^{(1)}\G_m\right).\]
Then Totaro's question has an positive answer for $X \in H^1(k,T)$ of index $m$, $n$, and $mn$.
Furthermore, if $(\ind(X), m) = 1$, then $\ind(X) \mid n$, and if $(\ind(X), n) = 1$, then $\ind(X) \mid m$.
\end{prop}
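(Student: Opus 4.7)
The plan is to analyze $X \in H^1(k, T)$ via the two short exact sequences of $k$-tori in Lemma \ref{mntori}(a), whose long exact sequences give the descriptions in Lemma \ref{mntori}(b), by base-changing $X$ to $K_1$, $K_2$, $L$, and (if necessary) the Galois closure of $L/k$.  The fundamental structural input is $K_1 \otimes_k K_2 \cong L$, which follows from $\gcd(m,n) = 1$ and forces several norm cokernels to vanish via the Chinese Remainder argument in the proof of Lemma \ref{normtori}(c); concretely, after base change to $K_2$ the target of $\delta_1$ becomes $(K_1 \otimes_k K_2)^\times / N((L \otimes_k K_2)^\times) = L^\times / N_{L \otimes_{K_1} L/L}((L \otimes_{K_1} L)^\times) = 0$, so $\delta_{1,K_2} = 0$, and symmetrically $\delta_{2,K_1} = 0$.

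For the positive-answer cases, the candidate killing fields are $F = K_2$, $F = K_1$, and $F = L$ (or possibly a variant subfield of the Galois closure) when $\ind(X) = m, n, mn$ respectively.  In the $\ind(X) = mn$ case, base change of both sequences of Lemma \ref{mntori}(a) to $L$, combined with Shapiro's Lemma and repeated application of Lemma \ref{normtori}(c) to the \'etale decompositions of $K_i \otimes_k L$, is designed to produce $H^1(L, T_L) = 0$---though this may require replacing $L$ with a different subfield $L'$ of the Galois closure in the non-Galois case, mirroring the $D_4$-branch of Proposition \ref{z4tori}.  For $\ind(X) = m$ or $n$, the $\delta$-vanishing reduces the problem to showing that a specific class in $(K_j \otimes_k K_j)^{(1)}_{K_j}$ is a norm from $(L \otimes_k K_j)^{(1)}_{K_i \otimes_k K_j}$; this is where the index hypothesis enters, likely via Theorem 9.2 of Gabber--Liu--Lorenzini \cite{gll13} and the Galois-theoretic reductions of \cite{garihoff06}, paralleling the closing argument in the proof of Proposition \ref{z4tori}.

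For the ``furthermore'' claim, suppose $(\ind(X), m) = 1$, so $\per(X)$ is coprime to $m$.  Choosing $a \in \mathbb{Z}$ with $am \equiv 1 \pmod{\per(X)}$ and using the restriction-corestriction identity $\mathrm{cor}_{K_2/k} \circ \mathrm{res}_{K_2} = m \cdot \id$ on $H^1(k,T)$, one obtains $X = \mathrm{cor}_{K_2/k}(a X_{K_2})$.  The Mackey formula (applicable because $K_1 \otimes_k K_2 \cong L$) then gives $X_{K_1} = \mathrm{cor}_{L/K_1}(a X_L)$, so $X_{K_1} = 0$ as soon as $X_L = 0$ is established via the $\ind(X) = mn$ analysis, and consequently $\ind(X) \mid [K_1:k] = n$.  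The symmetric argument handles $(\ind(X), n) = 1$.

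The main obstacle is twofold: (i) establishing $X_L = 0$---or finding a suitable variant $L'$---when $L/k$ is not Galois, which will require a careful \'etale-decomposition analysis in the spirit of the $D_4$-branch of Proposition \ref{z4tori}; and (ii) the explicit norm construction in the $\ind(X) \in \{m, n\}$ cases, which should parallel the quadratic-form computation at the end of Proposition \ref{z4tori} but must now be carried out in the general coprime $(m,n)$ setting, and I expect this second point to be where the bulk of the technical work lies.
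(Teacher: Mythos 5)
Your plan correctly identifies the two exact sequences from Lemma \ref{mntori} and the candidate killing fields $K_1$, $K_2$, $L$, but you have misidentified the mechanism by which the index hypothesis enters, and this is a real gap. For $\ind(X)=m$, the paper does \emph{not} need Theorem 9.2 of Gabber--Liu--Lorenzini nor any Springer-style odd-degree descent. The crucial observation you are missing is a simple torsion argument over $k$ itself: because $K_2 \subseteq L$ with $[L:K_2]=n$, the group $K_2^\times/N_{L/K_2}(L^\times)$ is $n$-torsion (indeed $N_{L/K_2}(x)=x^n$ for $x \in K_2^\times$). Since $\per(X) \mid \ind(X)=m$ and $(m,n)=1$, it follows directly that $\delta_2(X)=1$, so $X$ lifts to $\beta \in (K_1^\times)_k^{(1)}$. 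One then base-changes to $K_2$, uses the isomorphism $K_1 \otimes_k K_2 \cong L$, and a two-line diagram chase (sending $(\beta,1) \in (L\otimes_k K_2)^\times \cong L^\times \times A^\times$ simultaneously to $\beta$ under $\id \cdot N_{A/L}$ and to $(1,1)$ under the norm to $K_2\otimes_k K_2$) exhibits $\beta$ as a norm, giving $X_{K_2}=0$. Your alternative of lifting \emph{after} base change (via $\delta_{1,K_2}=0$) produces an arbitrary lift $\gamma \in ((K_2\otimes_k K_2)^\times)_{K_2}^{(1)}$ with no handle to show it is a norm, which is why you are driven toward a quadratic-form-style computation that in fact is not needed here.

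The same misapprehension affects your $\ind(X)=mn$ case and the ``furthermore'' claim. The paper shows $X_L = 0$ unconditionally (not just when $\ind(X)=mn$) by base-changing the first short exact sequence of tori from Lemma \ref{mntori}(a) to $K_2$, identifying the resulting tori with quasitrivial ones so that $X_{K_2}$ lifts to $\beta \in B^\times$, and then a second \'etale decomposition over $L$ kills $\beta$; there is no Galois-closure detour and no case-split over whether $L/k$ is Galois, so your worry about mirroring the $D_4$-branch of Proposition \ref{z4tori} is unfounded. Finally, the ``furthermore'' statement falls out for free: the torsion-plus-diagram-chase argument for $\ind(X)=m$ only uses $(\ind(X),n)=1$, so that hypothesis alone already yields $X_{K_2}=0$ and hence $\ind(X) \mid m$. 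Your restriction--corestriction--Mackey route is not wrong in principle, but it is both heavier and dependent on first establishing $X_L=0$, which you had relegated to the case you were least sure how to handle. In short, replace the appeal to \cite{gll13} and Springer with the $n$-torsion observation on $\delta_2$ and the explicit norm diagram after base change to $K_2$; that is the entire content of the $m$ and $n$ cases.
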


\begin{proof}
By Lemma \ref{mntori}.(c), the following sequences of abelian groups are exact.
\[(L^\times)_{K_1}^{(1)} \xrightarrow{N_{L/K_2}} (K_2^\times)_{k}^{(1)} \to H^1(k,T) \xrightarrow{\delta_1} K_1^\times/N_{L/K_1}(L^\times)\]
\[(L^\times)_{K_2}^{(1)} \xrightarrow{N_{L/K_1}} (K_1^\times)_{k}^{(1)} \to H^1(k,T) \xrightarrow{\delta_2} K_2^\times/N_{L/K_2}(L^\times)\]
The proof will proceed according to the index.

First, suppose that $\ind(X) = m$.
Since $[L : K_2] = n$, $K_2^\times/N_{L/K_2}(L^\times)$ is $n$-torsion.
But $(m,n) = 1$, and $\per(X) \mid \ind(X)$.
So $\delta_2(X) = 1$.
Then $X$ lifts to some $\beta \in (K_1^\times)_{k}^{(1)}$.
Now, \[ K_2 \otimes_k K_2 \cong K_2 \times B \] where $B/K_2$ is an \etale algebra as $K_2/k$ is separable, \[ K_1 \otimes_k K_2 \cong L \] as $K_1, K_2 \subseteq L$ have coprime degrees and are therefore $k$-linearly disjoint such that \[ [K_1 : k][K_2 : k] = mn = [L : k],\] and \[ L \otimes_k K_2 \cong L \times A \] where $A \cong B \otimes_{K_2} L/L$ is an \etale algebra as $K_2 \subseteq L$ and $K_2/k$ is separable.
After identifying through the natural isomorphisms, the following diagram commutes.
\[
\begin{tikzpicture}[node distance = 0.65cm, auto]
\node (L) {$L$};
\node (K1) [below of = L, below of = L, left of = L, left of = L, left of = L, left of = L, left of = L, left of = L, left of = L] {$K_1$};
\node (K2) [below of = L, below of = L,below of = L, right of = L, right of = L, right of = L, right of = L, right of = L, right of = L] {$K_2$};
\node (k) [left of = L, below of = L, below of = L, below of = L, below of = L, below of = L] {$k$};

\draw[->] (L) to node [swap] {$N_{L/K_1}$} (K1);
\draw[->] (L) to node {$N_{L/K_2}$} (K2);
\draw[<-] (k) to node {$N_{K_1/k}$} (K1);
\draw[<-] (k) to node [swap] {$N_{K_2/k}$} (K2);

\node (F) [above of = L, above of = L, above of = L, above of = L, above of = L, above of = L] {$L \times A$};
\node (M1) [below of = F, below of = F, left of = F, left of = F, left of = F, left of = F, left of = F, left of = F, left of = F] {$L$};
\node (M2) [below of = F, below of = F, below of = F, right of = F, right of = F, right of = F, right of = F, right of = F, right of = F] {$K_2 \times B$};
\node (l) [left of = F, below of = F, below of = F, below of = F, below of = F, below of = F] {$K_2$};

\draw[->] (F) to node [swap] {$\id \cdot N_{A/L}$} (M1);
\draw[->] (F) to node {$N_{L/K_2} \times N_{A/B}$} (M2);
\draw[<-] (l) to node {$N_{L/K_2}$} (M1);
\draw[<-] (l) to node [swap] {$\id \cdot N_{B/K_2}$} (M2);

\draw[->] (L) to node [swap] {$\otimes_k K_2$} (F);
\draw[->] (K1) to node {$\otimes_k K_2$} (M1);
\draw[->] (K2) to node [swap] {$\otimes_k K_2$} (M2);
\draw[->] (k) to node [swap] {$\otimes_k K_2$} (l);
\end{tikzpicture}
\]
Observe that \[(\id \cdot N_{A/L})(\beta,1) = \beta\] and \[(N_{L/K_2} \times N_{A/B})(\beta,1) = \left(N_{K_1/k}(\beta), N_{A/B}(1)\right) = (1,1),\] meaning that $X_{K_2} = 0 \in H^1(K_2, T_{K_2})$.
Since $\ind(X) = [K_2 : k] = m$, it suffices to take $F = K_2$.
But only that $(\ind(X),n) = 1$ is needed to show that $X_{K_2} = 0$.
So $(\ind(X),n) = 1$ implies that $\ind(X) \mid m$.
By a symmetric argument, $F = K_1$ suffices when $\ind(X) = n$, and $(\ind(X),m) = 1$ implies that $\ind(X) \mid n$.

Now, suppose that $\ind(X) = mn$.
Since the sequence of $k$-tori \[ 1 \to T \to R_{K_1/k}(R_{L/K_1}^{(1)}\G_m) \xrightarrow{N_{L/K_2}} R_{K_2/k}^{(1)}\G_m \to 1 \] is short exact, so is the sequence of $K_2$-tori \[ 1 \to T_{K_2} \to R_{L/K_2}(R_{L \times A/L}^{(1)}\G_m) \xrightarrow{N_{L \times A/K_2 \times B}} R_{K_2 \times B/K_2}^{(1)}\G_m \to 1.\]
Since $K_2^{\textup{s}}$-points of $R_{K_2 \times B/K_2}^{(1)}\G_m$ take the form $(N_{B \otimes_{K_2} K_2^{\textup{s}}/K_2^{\textup{s}}}(\beta^{-1}),\beta)$ for $\beta \in (B \otimes_{K_2} K_2^{\textup{s}})^\times$, \[R_{K_2 \times B/K_2}^{(1)}\G_m \cong \G_{m,B}.\] 
By a similar argument, \[R_{L/K_2}(R_{L \times A/L}^{(1)}\G_m) \cong R_{L/K_2}\G_{m,A}.\]
So \[ 1 \to T_{K_2} \to R_{L/K_2}\G_{m,A} \xrightarrow{N_{A/B}} \G_{m,B} \to 1 \] is a short exact sequence of $K_2$-tori.
Since $A/L$ is an \etale algebra, $H^1(L,\G_{m,A}) = 0$ by Hilbert 90.
Taking Galois cohomology then yields the long exact sequence of abelian groups \[ A^\times \xrightarrow{N_{A/B}} B^\times \to H^1(K_2,T_{K_2}) \to 0.\]
So $X_{K_2}$ lifts to some $\beta \in B^\times$.
Let $C/L$ be the \etale algebra such that \[L \otimes_{K_2} L \cong L \times C.\]
Then since \[ 
\begin{array}{rcl}
A \otimes_{K_2} L & \cong & B \otimes_{K_2} L \otimes_{K_2} L \\
& \cong & B \otimes_{K_2} (L \times C) \\
& \cong & A \times (B \otimes_{K_2} C), 
\end{array}\] the following diagram commutes.
\[
\begin{tikzpicture}[node distance = 2cm, auto]
\node (C) {$B \otimes_{K_2} L$};
\node (D) [right of = C, right of = C] {$A$};
\node (A) [above of = C] {$A \otimes_{K_2} L$};
\node (B) [right of = A, right of = A] {$A \times (B \otimes_{K_2} C)$};

\draw[->] (A) to node {$\sim$} (B);
\draw[->] (A) to node [swap] {$N_{A \otimes_{K_2} L/A}$} (C);
\draw[->] (B) to node {$\id \cdot N_{B \otimes_{K_2} C/A}$} (D);
\draw[->] (C) to node {$\sim$} (D);
\end{tikzpicture}
\]
But \[ (\id \cdot N_{B \otimes_{K_2} C/A})(\beta,1) = \beta, \] meaning that $X_L = (X_{K_2})_L = 0 \in H^1(L,T_L)$.
Since $[L:k] = mn$, $F = L$ suffices.
\end{proof}

\begin{cor}
\label{pqcor}
Consider the following diagram of separable field extensions
\[
\begin{tikzpicture}[node distance = 0.8cm, auto]
\node (L) {$L$};
\node (K1) [below of = L, below of = L, left of = L, left of = L] {$K_1$};
\node (K2) [below of = L, below of = L, below of = L, right of = L, right of = L] {$K_2$};
\node (k) [below of = L, node distance = 4cm] {$k$};

\draw[-] (L) to node [swap] {$p$} (K1);
\draw[densely dotted] (L) to node {$q$} (K2);
\draw[densely dotted] (k) to node {$q$} (K1);
\draw[-] (k) to node [swap] {$p$} (K2);
\end{tikzpicture}
\]
for some distinct primes $p$ and $q$, and let \[T = R_{K_1/k}\left(R_{L/K_1}^{(1)}\G_m\right) \cap R_{K_2/k}\left(R_{L/K_2}^{(1)}\G_m\right).\]
Then Totaro's question has a positive answer for $T$.
\end{cor}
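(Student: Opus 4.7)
The plan is to reduce to the cases already handled by Proposition \ref{mnprop} together with Lemma \ref{serres}. Since $[L:k]=pq$ with $p,q$ distinct primes, the divisors of $pq$ are $\{1,p,q,pq\}$; if I can show that $\ind(X)$ divides $pq$ for every $X \in H^1(k,T)$, then every class falls into one of these four values, each of which is handled by results already available.

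The heart of the argument is therefore the bound $\ind(X) \mid pq$. For this, I would revisit the exact-sequence computation carried out in the $\ind(X)=mn$ portion of the proof of Proposition \ref{mnprop}. Starting from the short exact sequence $1 \to T \to R_{K_1/k}(R_{L/K_1}^{(1)}\G_m) \to R_{K_2/k}^{(1)}\G_m \to 1$ of Lemma \ref{mntori}(a), base-changing to $K_2$, identifying $R_{K_2 \times B/K_2}^{(1)}\G_m \cong \G_{m,B}$ and $R_{L/K_2}(R_{L \times A/L}^{(1)}\G_m) \cong R_{L/K_2}\G_{m,A}$, and invoking Hilbert 90 yields a surjection $B^\times \twoheadrightarrow H^1(K_2,T_{K_2})$. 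So the image $X_{K_2}$ of any $X$ lifts to some $\beta \in B^\times$, and the commutative diagram built from the isomorphism $A \otimes_{K_2} L \cong A \times (B \otimes_{K_2} C)$ shows $X_L = 0 \in H^1(L,T_L)$. Since the argument uses no hypothesis on $\ind(X)$, one obtains $\ind(X) \mid [L:k] = pq$ for every $X \in H^1(k,T)$.

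With the bound in hand, I would enumerate the four possible values of $\ind(X)$. The case $\ind(X)=1$ is resolved by Lemma \ref{serres} with $F=k$. The three cases $\ind(X) \in \{p,q,pq\}$ are exactly the indices handled by Proposition \ref{mnprop} applied with $m=p$ and $n=q$: it produces a separable extension $F/k$ of the correct degree with $X_F=0 \in H^1(F,T_F)$. The main obstacle, as outlined above, is the first step: recognizing that the bound $\ind(X) \mid pq$, though not stated in the conclusion of Proposition \ref{mnprop}, is implicit in its proof and, crucially, uniform in $X$; once this observation is made, the corollary follows immediately by combining the existing cases.
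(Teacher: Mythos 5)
Your proof is correct and follows the paper's intent: the paper's entire proof of this corollary is the one-line remark that it follows immediately from Proposition \ref{mnprop}. What you have done is unpack why this is so, and in doing so you have put your finger on a small but real subtlety. As stated, the conclusion of Proposition \ref{mnprop} handles indices $m$, $n$, $mn$ and gives the implications $(\ind(X),m)=1 \Rightarrow \ind(X)\mid n$ and $(\ind(X),n)=1 \Rightarrow \ind(X)\mid m$; with $m=p$, $n=q$ distinct primes, these alone do not rule out the a priori possibility that $\ind(X)$ is divisible by $pq$ but strictly larger (note that $L/k$ need not be Galois, so one cannot simply invoke Lemma \ref{minsplit}(a) with $E=L$). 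You correctly observe that the missing bound $\ind(X)\mid pq$ is in fact established inside the proof of the $\ind(X)=mn$ case of Proposition \ref{mnprop}: the Shapiro/Hilbert 90 argument producing the surjection $B^\times \twoheadrightarrow H^1(K_2,T_{K_2})$ and the commutative norm diagram show $X_L = 0$ for \emph{every} $X\in H^1(k,T)$, with no hypothesis on $\ind(X)$. With that uniform bound in hand, $\ind(X)\in\{1,p,q,pq\}$, and each value is covered by Lemma \ref{serres} or Proposition \ref{mnprop}. So this is the same route as the paper, presented with an extra (and genuinely needed) observation made explicit rather than left to the reader.
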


\begin{proof}
The claim follows immediately from Proposition \ref{mnprop}.
\end{proof}

\section{Proof of Theorem \ref{mainthm}}

The proof of Theorem \ref{mainthm} will proceed according to $\Gal(E/k)$ where $E$ is the minimal splitting field of the torus.
Recall that for a given group, there may be multiple isomorphism classes of tori associated to that group (over suitably general fields) depending on how many conjugacy classes represent its isomorphism class in $\GL_2(\Z)$.
Finally: by Lemma \ref{serres} and Lemma \ref{minsplit}, one can reduce $\ind(X)$ to be a non-trivial proper divisor of $[E:k]$.

\subsection{Rank 1 Tori}

There are only two (conjugacy classes of) finite subgroups of $\GL_1(\Z) \cong \Z/2\Z$: $(1)$ and $\Z/2\Z$.
These correspond to the two classes of rank 1 tori.
For both types, a positive answer to Totaro's question is a trivial consequence of the previous reductions.

\subsubsection{$\Gal(E/k) \cong (1)$ and $T \cong \G_m$}

\begin{proof}
$T$ is quasitrivial, and so we are done by Hilbert 90.
\end{proof}

\subsubsection{$\Gal(E/k) \cong \mathbb{Z}/2\mathbb{Z}$ and $T \cong R_{E/k}^{(1)}\G_m$}

\begin{proof}
$[E:k]$ is prime, and so we are done by Lemma \ref{minsplit}.(b).
\end{proof}

\subsection{Rank 2 Tori}

There are 9 isomorphism classes and 15 conjugacy classes of finite subgroups of $\GL_2(\Z)$.

\subsubsection{$\Gal(E/k) \cong (1)$ and $T \cong \G_m \times \G_m$}

\begin{proof}
$T$ is quasitrivial, and so we are done by Hilbert 90.
\end{proof}

\subsubsection{$\Gal(E/k) \cong \Z/2\Z$}

\begin{enumerate}[(a)]
\item $T \cong R_{E/k}^{(1)} \times R_{E/k}^{(1)}\G_m$
\item $T \cong \G_m \times R_{E/k}^{(1)}\G_m$
\item $T \cong R_{E/k}\G_m$
\end{enumerate}

\begin{proof}
$[E:k]$ is prime, and so we are done by Lemma \ref{minsplit}.(b).
\end{proof}

\subsubsection{$\Gal(E/k) \cong \Z/2\Z \times \Z/2\Z$}

\begin{center}
\begin{tikzpicture}[node distance = 2cm, auto]

\node (E) {$E$};
\node (L1) [below of = E, left of = E] {$L_1$};
\node (L2) [below of = E, right of = E] {$L_2$};
\node (k) [below of = E, node distance = 4cm] {$k$};

\draw[-] (E) to node [swap] {2} (L1);
\draw[-] (E) to node {2} (L2);
\draw[-] (k) to node {2} (L1);
\draw[-] (k) to node [swap] {2} (L2);
\end{tikzpicture}
\end{center}

\begin{enumerate}[(a)]
\item $T \cong R_{L_1/k}\left(R_{E/L_1}^{(1)}\G_m\right)$

\begin{proof}
Since $[E:k] = 4$, we can assume that $\ind(X) = 2$.
Then \[H^1(k,T) \cong H^1(L_1,R_{E/L_1}^{(1)}\G_m) \cong \Br(E/L_1)\] by Lemma \ref{normtori}.(b).
Let $\delta : H^1(k,T) \to \Br(E/L_1)$ denote the composition.
Since \[
\begin{array}{rcl}
\delta(X_{L_2}) & \cong & \delta(X) \otimes_k L_2 \\
& \cong & \delta(X) \otimes_{L_1} L_1 \otimes_k L_2 \\
& \cong & \delta(X) \otimes_{L_1} E
\end{array}\] is split and $[L_2:k] = 2$, it suffices to take $F = L_2$.
\end{proof}

\item $T \cong R_{L_1/k}^{(1)}\G_m \times R_{L_2/k}^{(1)}\G_m$

\begin{proof}
Since $[E:k] = 4$, we can assume that $\ind(X) = 2$.
As \[
\begin{array}{rcl} 
H^1(k,T) & \cong & H^1(k, R_{L_1/k}^{(1)}\G_m \times R_{L_2/k}^{(1)}\G_m) \\ 
& \cong & H^1(k, R_{L_1/k}^{(1)}\G_m) \times H^1(k,R_{L_2/k}^{(1)}\G_m) 
\\ & \cong & \Br(L_1/k) \times \Br(L_2/k)
\end{array}
\] by Lemma \ref{normtori}.(b), $X$ can be identified with a pair of division algebras $D_1 \in \Br(L_1/k)$ and $D_2 \in \Br(L_2/k)$. 
Since $D_1$ and $D_2$ are both split over quadratic extensions $L_1$ and $L_2$, respectively, each is either a field or a quaternion division algebra. 
If either of $D_1$ or $D_2$ is a field, then it suffices to take either $F = L_2$ or $L_1$, respectively. So we can assume that both $D_1$ and $D_2$ are quaternion division algebras.

Let $D = D_1 \otimes_k D_2$. 
By Albert's Theorem \cite{albert72}, either $D$ is a division algebra or $D_1$ and $D_2$ have a common subfield $F$ separable over $k$ such that $[F:k] = 2$ that necessarily splits both algebras. 
Suppose that $D$ is a division algebra.
Then \[ \ind(D) = \deg(D) = \deg(D_1)\deg(D_2) = 4.\]
But since $\ind(X) = 2$, it can be assumed by Theorem 9.2 from Gabber--Liu--Lorenzini \cite{gll13} using standard Galois theory reductions (cf. Lemma 1.5 from Garibaldi--Hoffman \cite{garihoff06}) that there is a tower of separable field extensions $K'/K/k$ such that $[K':K] = 2$ and $[K:k] = m$. 
Since $[K:k]$ is odd and $\ind(D) = 4$, $D_K$ is a division algebra.
But as $D_{K'}$ is split and $[K':K] = 2$, \[\ind(D) = \ind(D_{K}) = 2,\] a contradiction.
So $D_1$ and $D_2$ have a common subfield $F$ separable over $k$ such that $[F:k] = 2$ that necessarily splits both algebras, completing the proof. 
\end{proof}
\end{enumerate}

\subsubsection{$\Gal(E/k) \cong \Z/3\Z$ and $T \cong R_{E/k}^{(1)}\G_m$}

\begin{proof}
$[E:k]$ is prime, and so we are done by Lemma \ref{minsplit}.(b).
\end{proof}

\subsubsection{$\Gal(E/k) \cong \Z/4\Z = \langle \phi \rangle$ \textup{ and } $T \cong R_{E^{\phi^2}/k}\left(R_{E/E^{\phi^2}}^{(1)}\G_m\right)$}

\begin{proof}
We are done by Proposition \ref{z4tori}.
\end{proof}

\subsubsection{$\Gal(E/k) \cong \Z/3\Z \times \Z/2\Z = \langle \theta \rangle \times \langle \tau \rangle$}

\begin{center}
\begin{tikzpicture}[node distance = 0.8cm, auto]
\node (E) {$E$};
\node (L1) [below of = E, below of = E, left of = E, left of = E] {$E^\tau$};
\node (L2) [below of = E, below of = E, below of = E, right of = E, right of = E] {$E^\theta$};
\node (k) [below of = E, node distance = 4cm] {$k$};

\draw[-] (E) to node [swap] {2} (L1);
\draw[densely dotted] (E) to node {3} (L2);
\draw[densely dotted] (k) to node {3} (L1);
\draw[-] (k) to node [swap] {2} (L2);
\end{tikzpicture}
\end{center}

$T = R_{E^{\tau}/k}\left(R_{E/E^{\tau}}^{(1)}\G_m\right) \cap R_{E^{\theta}/k}\left(R_{E/E^{\theta}}^{(1)}\G_m\right)$

\begin{proof}
We are done by Corollary \ref{pqcor}.
\end{proof}

\subsubsection{$\Gal(E/k) \cong S_3 = \langle \theta \rangle \rtimes \langle \tau \rangle$}

\begin{center}
\begin{tikzpicture}[node distance = 0.8cm, auto]
\node (E) {$E$};
\node (L2) [below of = E, below of = E, left of = E, left of = E] {$E^{\theta\tau}$};
\node (L1) [left of = L2] {$E^{\tau}$};
\node (L3) [right of = L2] {$E^{\theta^2\tau}$};
\node (K1) [below of = E, below of = E, below of = E, right of = E, right of = E] {$E^\theta$};
\node (k) [below of = E, node distance = 4cm] {$k$};

\draw[-] (E) to node [swap] {2} (L1);
\draw[-] (E) to node {} (L2);
\draw[-] (E) to node {} (L3);
\draw[densely dotted] (E) to node {3} (K1);

\draw[densely dotted] (k) to node {3} (L1);
\draw[densely dotted] (k) to node {} (L2);
\draw[densely dotted] (k) to node {} (L3);
\draw[-] (k) to node [swap] {2} (K1);
\end{tikzpicture}
\end{center}

\begin{enumerate}[(a)]
\item $T \cong R_{E^{\tau}/k}^{(1)}\G_m$.
\begin{proof} Since $[E:k] = 6$, the only cases to consider are $\ind(X) = 2$ and 3.
But by Lemma \ref{normtori}.(c), only $\ind(X) = 3$ is possible, and $F = E^{\tau}$ suffices by Lemma \ref{normtori}.(c).
\end{proof}

 \item $T \cong R_{E^{\tau}/k}\left(R_{E/E^{\tau}}^{(1)}\G_m\right) \cap R_{E^{\theta}/k}\left(R_{E/E^{\theta}}^{(1)}\G_m\right)$.
\begin{proof} 
We are done by Corollary \ref{pqcor}.
\end{proof}
\end{enumerate}

\subsubsection{$\Gal(E/k) \cong D_4 \cong \Z/4\Z \rtimes \Z/2\Z = \langle \phi \rangle \rtimes \langle \tau \rangle$}

\begin{center}
\begin{tikzpicture}[node distance = 1.5cm, auto]
\node (E) {$E$};
\node (L3) [below of = E] {$E^{\phi^2}$};
\node (L2) [left of = L3] {$E^{\tau}$};
\node (L1) [left of = L2] {$E^{\phi^2 \tau}$};
\node (L4) [right of = L3] {$E^{\phi^3 \tau}$};
\node (L5) [right of = L4] {$E^{\phi\tau}$};
\node (K2) [below of = L3] {$E^{\phi}$};
\node (K1) [left of = K2] {$E^{\phi^2,\tau}$};
\node (K3) [right of = K2] {$E^{\phi^2, \phi\tau}$};
\node (k) [below of = K2] {$k$};

\draw[-] (E) to node [swap] {2} (L1);
\draw[-] (E) to node {} (L2);
\draw[-] (E) to node {} (L3);
\draw[-] (E) to node {} (L4);
\draw[-] (E) to node {2} (L5);
\draw[-] (K1) to node {2} (L1);
\draw[-] (K1) to node {} (L2);
\draw[-] (K1) to node {} (L3);
\draw[-] (K2) to node {} (L3);
\draw[-] (K3) to node {} (L3);
\draw[-] (K3) to node {} (L4);
\draw[-] (K3) to node [swap] {2} (L5);
\draw[-] (k) to node {2} (K1);
\draw[-] (k) to node {} (K2);
\draw[-] (k) to node [swap] {2} (K3);
\end{tikzpicture}
\end{center}

\begin{enumerate}[(a)]
\item $T \cong R_{E^{\phi^2,\tau}/k}\left( R_{E^{\tau}/E^{\phi^2,\tau}}^{(1)}\G_m \right)$

\begin{proof}
We are done by Proposition \ref{z4tori}.
\end{proof}

\item $T \cong R_{E^{\phi^2,\phi\tau}/k}\left( R_{E^{\phi\tau}/E^{\phi^2,\phi\tau}}^{(1)}\G_m \right)$

\begin{proof}
$T$ is isomorphic to the torus from (a).
\end{proof}

\end{enumerate}

\subsubsection{$\Gal(E/k) \cong D_{6} \cong \Z/6\Z \rtimes \Z/2\Z = \langle \sigma \rangle \rtimes \langle \tau \rangle$}

\begin{center}
\begin{tikzpicture}[node distance = 1cm, auto]
\node (E) {$E$};

\node (F4) [below of = E, below of = E, left of = E, left of = E] {$E^{\sigma^{4}\tau}$};
\node (F3) [left of = F4] {$E^{\sigma^{2}\tau}$};
\node (F2) [left of = F3] {$E^{\tau}$};
\node (b11) [left of = F2] {};
\node (F1) [left of = b11] {$E^{\sigma^3}$};
\node (F5) [below of = E, below of = E] {$E^{\sigma^{3}\tau}$};
\node (F6) [right of = F5] {$E^{\sigma^{5}\tau}$};
\node (F7) [right of = F6] {$E^{\sigma\tau}$};

\node (F8) [below of = F7, right of = F7, right of = F7, right of = F7] {$E^{\sigma^2}$};

\node (b21) [below of = b11] {};
\node (b22) [below of = F7, right of = F7] {};

\node (L1) [below of = b21] {$E^{\sigma^3, \tau}$};
\node (L2) [right of = L1] {$E^{\sigma^3, \sigma^2\tau}$};
\node (L3) [right of = L2] {$E^{\sigma^3, \sigma^4\tau}$};
\node (b31) [below of = b22] {};

\node (K3) [below of = b31] {$E^{\sigma^2, \sigma\tau}$};
\node (K2) [left of = K3] {$E^{\sigma^2, \tau}$};
\node (K1) [left of = K2] {$E^{\sigma}$};
\node (b42) [left of = K1] {};
\node (b41) [left of = b42] {};

\node (k) [below of = b41] {$k$};

\draw[-] (E) to node [swap] {2} (F1);
\draw[-] (E) to node {} (F2);
\draw[-] (E) to node {} (F3);
\draw[-] (E) to node {} (F4);
\draw[-] (E) to node {} (F5);
\draw[-] (E) to node {} (F6);
\draw[-] (E) to node {} (F7);

\draw[densely dotted] (E) to node {3} (F8);

\draw[-] (F1) to node [swap] {2} (L1);
\draw[-] (F1) to node {} (L2);
\draw[-] (F1) to node {} (L3);
\draw[densely dotted] (F1) to node {} (K1);

\draw[-] (F2) to node {} (L1);
\draw[-] (F3) to node {} (L2);
\draw[-] (F4) to node {} (L3);

\draw[-] (F5) to node {} (L1);
\draw[-] (F6) to node {} (L2);
\draw[-] (F7) to node {} (L3);

\draw[-] (K1) to node {} (F8);

\draw[densely dotted] (K2) to node {} (F2);
\draw[densely dotted] (K2) to node {} (F3);
\draw[densely dotted] (K2) to node {} (F4);
\draw[-] (K2) to node {} (F8);

\draw[densely dotted] (K3) to node {} (F5);
\draw[densely dotted] (K3) to node {} (F6);
\draw[densely dotted] (K3) to node {} (F7);
\draw[-] (K3) to node [swap] {2} (F8);

\draw[-] (k) to node {} (K1);
\draw[-] (k) to node {} (K2);
\draw[-] (k) to node [swap] {2} (K3);
\draw[densely dotted] (k) to node {3} (L1);
\draw[densely dotted] (k) to node {} (L2);
\draw[densely dotted] (k) to node {} (L3);
\end{tikzpicture}
\end{center}

\begin{enumerate}[(a)]
\item $T \cong R_{E^{\sigma^2}/k}\left(R_{E/E^{\sigma^2}}^{(1)}\G_m\right) \cap R_{E^{\sigma^3}/k}\left(R_{E/E^{\sigma^3}}^{(1)}\G_m\right) \cap R_{E^{\tau}/k}\G_m$

\begin{proof}
Observe that $t \in T(A)$ for a $k$-algebra $A$ if and only if \[ 
\begin{array}{rcl}
t^{\sigma^2}t^{\sigma^4}t & = & 1 \\
t^{\sigma^3}t & = & 1\\
t^\tau & = & t
\end{array}\]
which means that \[ T \cong R_{E^{\sigma^2,\tau}/k}\left(R_{E^{\tau}/E^{\sigma^2,\tau}}^{(1)}\G_m\right) \cap R_{E^{\sigma^3,\tau}/k}\left(R_{E^{\tau}/E^{\sigma^3,\tau}}^{(1)}\G_m\right). \]
So we are done by Proposition \ref{z4tori}.
\end{proof}

\item $T \cong R_{E^{\sigma^2}/k}\left(R_{E/E^{\sigma^2}}^{(1)}\G_m\right) \cap R_{E^{\sigma^3}/k}\left(R_{E/E^{\sigma^3}}^{(1)}\G_m\right) \cap R_{E^{\tau}/k}\left(R_{E/E^{\tau}}^{(1)}\G_m\right)$

\begin{proof}
$T$ is isomorphic to the torus from (a).
\end{proof}
\end{enumerate}

\noindent This exhausts \Vosk's classification and thus completes the proof of Theorem \ref{mainthm}.

\section{del Pezzo Surfaces}

We now prove a general consequence of Theorem \ref{mainthm}.

\begin{cor}
\label{predpcor}
Let $X$ be a regular variety over a field containing a principal homogeneous space of a smooth torus of rank $\leq 2$ as a dense open subset.
If $X$ admits a zero-cycle of degree $d \geq 1$, then $X$ has a closed \etale point of degree dividing $d$.
\end{cor}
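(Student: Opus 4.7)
The proof is a formal repackaging of Theorem \ref{mainthm} with two tools already cited in the introduction, so I expect no substantive obstacle; the point is simply to chain the three results correctly. Let $k$ denote the ground field, and let $U \subseteq X$ be the dense open subscheme which, by hypothesis, carries the structure of a $T$-torsor for some smooth $k$-torus $T$ of rank $\leq 2$. Suppose $X$ admits a zero-cycle of degree $d \geq 1$, so in particular $\ind(X) \mid d$.

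First, I would apply the moving lemma for zero-cycles on regular varieties (the observation recalled in the paragraph preceding Corollary \ref{smoocomp}): because $X$ is regular and $U \subseteq X$ is open and dense, the index is a birational invariant and hence $\ind(U) = \ind(X)$. In particular $\ind(U) \mid d$.

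Next, I would invoke the theorem of Gabber--Liu--Lorenzini \cite{gll13} also cited in the introduction: because $U$ is a torsor under a smooth $k$-torus, it is geometrically smooth, regular, and of finite type over $k$, so $\ind_{\textup{s}}(U) = \ind(U)$. It therefore suffices to produce a closed \etale point of $U$ of degree exactly $\ind(U)$.

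Finally, I would apply Theorem \ref{mainthm} to $U \in H^1(k,T)$: since $T$ has rank $\leq 2$, there is a separable field extension $F/k$ of degree $\ind(U)$ such that $U_F = 1 \in H^1(F,T_F)$, equivalently $U(F) \neq \varnothing$. Any such $F$-point gives a closed \etale point on $U$ (hence on $X$) of degree $\ind(U)$, and $\ind(U) = \ind(X) \mid d$, so this is a closed \etale point of $X$ of degree dividing $d$, completing the proof.
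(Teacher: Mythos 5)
Your proof is correct and takes essentially the same route as the paper: both reduce to the open dense torsor $U$ via the moving lemma for zero-cycles (so that $\ind(U)\mid d$) and then invoke Theorem \ref{mainthm} to produce a separable point of degree $\ind(U)$. The only cosmetic difference is that you explicitly invoke the Gabber--Liu--Lorenzini equality $\ind_{\textup{s}}(U)=\ind(U)$ as an intermediate step, whereas the paper folds this into Theorem \ref{mainthm} (whose statement already yields a \emph{separable} extension $F$), so that step is harmless but redundant.
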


\begin{proof}
Write $X = \overline{Y}$ for some principal homogeneous space $Y$ under a torus $T$ of rank $\leq 2$.
By a general moving lemma for zero-cycles (cf. Theorem 6.8 from Gabber--Liu--Lorenzini \cite{gll13}), given a closed point on $X$ of degree $n$, there is a zero-cycle on $Y$ of degree $n$.
So given a zero-cycle on $X$ of degree $d$, there is a zero-cycle on $Y$ of degree $d$.
By Theorem \ref{mainthm}, $Y \subseteq X$ has a closed \etale point of degree dividing $d$.
\end{proof}

A \emph{del Pezzo surface} is a smooth projective surface $X$ over a field $k$ whose anticanonical bundle $\omega_X^{-1}$ is ample.
Its \emph{degree} is the self-intersection number $D = (K_X, K_X)$ of its canonical divisor $K_X$ and lies between 1 and 9.
If $D = 8$, then $X_{k^\textup{s}}$ is isomorphic to either $\mathbb{P}_{k^\textup{s}}^2$ blown up at a point or $\mathbb{P}_{k^\textup{s}}^1 \times \mathbb{P}_{k^\textup{s}}^1$; otherwise, $X_{k^\textup{s}}$ is isomorphic to $\mathbb{P}_{k^\textup{s}}^2$ blown up at $9 - D$ points in general position.
Manin \cite{manin86} is a standard reference for these results; in fact, it is a theorem of Manin that del Pezzo surfaces of degree 6 contain torsors of rank 2 tori as dense open subsets (cf. Teorema 8.6 from \cite{manin72}, Theorem 30.3.1 from \cite{manin86}).
This gives

\begin{cor}
\label{dpcor}
Let $X$ be a del Pezzo surface of degree 6.
If $X$ admits a zero-cycle of degree $d \geq 1$, then $X$ has a closed \etale point of degree dividing $d$.
\end{cor}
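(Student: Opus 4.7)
The plan is to observe that Corollary \ref{dpcor} is an immediate specialization of Corollary \ref{predpcor} (equivalently, Corollary \ref{smoocomp}), once the relevant structure theorem of Manin is invoked. There is essentially no new content to prove; the work is in verifying that the hypotheses of Corollary \ref{predpcor} are met by a del Pezzo surface of degree 6.

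First, I would note that a del Pezzo surface $X$ of degree 6 is by definition a smooth projective surface over $k$, hence in particular a regular variety over $k$. Second, I would cite Manin's classical theorem (Teorema 8.6 of \cite{manin72}, restated as Theorem 30.3.1 of \cite{manin86}) to the effect that $X$ contains, as a dense open subset, a principal homogeneous space $Y$ under a $k$-torus $T$ of rank $2$. (Concretely, $T$ is the two-dimensional torus arising as the Néron--Severi torus attached to the configuration of $(-1)$-curves on $X_{k^\textup{s}}$, and $Y$ is the complement of this hexagon of exceptional curves.)

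With these two inputs in hand, the hypotheses of Corollary \ref{predpcor} are satisfied with the torus $T$ of rank $2$ and the dense open torsor $Y \subseteq X$. So if $X$ admits a zero-cycle of degree $d \geq 1$, then Corollary \ref{predpcor} produces a closed \etale point on $X$ of degree dividing $d$, finishing the proof.

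The only real obstacle here is verifying precisely the geometric input from Manin, namely that the open complement of the six exceptional curves on $X_{k^\textup{s}}$ descends to a $T$-torsor over $k$ for a rank $2$ torus $T$; but this is classical and requires no new work, so the corollary is a purely formal consequence of Theorem \ref{mainthm} and the moving lemma of Gabber--Liu--Lorenzini already used in Corollary \ref{predpcor}.
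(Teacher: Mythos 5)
Your argument is exactly the paper's: the paper's proof is the one-line ``This follows immediately from Corollary 6.1,'' with the preceding paragraph supplying precisely the two inputs you verify (smoothness/regularity of a del Pezzo surface and Manin's theorem that a degree-6 del Pezzo contains a rank-2 torus torsor as a dense open subset). Your write-up simply makes explicit what the paper leaves implicit; the approach is identical.
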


\begin{proof}
This follows immediately from Corollary 6.1.
\end{proof}

Of independent interest are the particular rank 2 tori that arise from del Pezzo surfaces of degree 6 within \Vosk's classification.
By the explicit algebraic computations of Blunk \cite{blunk10}, over a non-separably-closed field $k$, each such torus takes the form \[  T = R_{K_2/k}\left(R_{L/K_2}^{(1)}\G_m\right)/R_{K_1/k}^{(1)}\G_m \] for some diagram of separable field extensions
\[
\begin{tikzpicture}[node distance = 0.8cm, auto]
\node (L) {$L$};
\node (K1) [below of = L, below of = L, left of = L, left of = L] {$K_1$};
\node (K2) [below of = L, below of = L, below of = L, right of = L, right of = L] {$K_2$};
\node (k) [below of = L, node distance = 4cm] {$k$};

\draw[-] (L) to node [swap] {$2$} (K1);
\draw[densely dotted] (L) to node {$3$} (K2);
\draw[densely dotted] (k) to node {$3$} (K1);
\draw[-] (k) to node [swap] {$2$} (K2);
\end{tikzpicture}
\]

\begin{lem}
\label{dplemma}
$T \cong R_{K_1/k}\left(R_{L/K_1}^{(1)}\G_m\right) \cap R_{K_2/k}\left(R_{L/K_2}^{(1)}\G_m\right)$.
\end{lem}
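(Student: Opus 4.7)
The plan is to exhibit an explicit isomorphism of tori. Set $A := R_{K_2/k}(R_{L/K_2}^{(1)}\G_m)$, $B := R_{K_1/k}^{(1)}\G_m$, and $T' := R_{K_1/k}(R_{L/K_1}^{(1)}\G_m) \cap R_{K_2/k}(R_{L/K_2}^{(1)}\G_m)$, so that $T = A/B$. Since $[K_1:k] = 3$ and $[K_2:k] = 2$ are coprime, $K_1$ and $K_2$ are linearly disjoint over $k$, whence $L \cong K_1 \otimes_k K_2$ and $N_{L/K_2}(b) = N_{K_1/k}(b)$ for every $b \in K_1^\times$. Consequently, each $b \in B$ satisfies $N_{L/K_2}(b) = 1$, giving the natural embedding $i : B \hookrightarrow A$ under which the quotient $T$ is formed.

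I would then introduce a morphism $\psi : A \to T'$ defined by $\psi(x) := x^2 \cdot N_{L/K_1}(x)^{-1}$, where $N_{L/K_1}(x) \in K_1^\times$ is viewed inside $L^\times$ via $i$. The identities $N_{K_1/k} \circ N_{L/K_1} = N_{L/k} = N_{K_2/k} \circ N_{L/K_2}$ imply that $\psi(x) \in T'$, and the identity $N_{L/K_1}(b) = b^{[L:K_1]} = b^2$ for $b \in K_1$ gives $\psi(b) = 1$ for $b \in B$. So $\psi$ factors as $\bar\psi : T \to T'$, which is an isogeny between rank-$2$ tori (the rank of $T'$ follows from the short exact sequence $1 \to T' \to A \xrightarrow{N_{L/K_1}} B \to 1$ provided by Lemma \ref{mntori}.(a), and the rank of $T$ from subtracting ranks).

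The main obstacle is upgrading $\bar\psi$ to an isomorphism: a naive snake-lemma argument applied to the short exact sequence above together with the section $i$ (which satisfies $N_{L/K_1} \circ i = [2]$) would only yield $T \cong T'/B[2]$. To sharpen this, I would work on character modules. Writing $\iota := N_{L/K_1}^*$ and $\pi := i^*$, the composition $\chargroup(A) \twoheadrightarrow \chargroup(T') \xrightarrow{\psi^*} \chargroup(A)$ equals $2\cdot\mathrm{id} - \iota\pi$, whose image visibly sits inside $\chargroup(T) = \ker \pi$. Using the numerical coincidence $\rk A = 2 \rk B$ particular to this setup, an $\mathbb{F}_2$-linear calculation identifies the image of $\psi^*$ with all of $\chargroup(T)$ modulo $2\chargroup(T)$; a Nakayama-type argument then upgrades this to the full image. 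Hence $\bar\psi^*$ is an isomorphism of $\Gamma$-modules, and $\bar\psi$ is the desired isomorphism of tori.
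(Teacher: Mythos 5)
Your map $\psi$, once one unwinds the norm (for $\sigma$ the generator of $\Gal(L/K_1)$ one has $N_{L/K_1}(x) = x\sigma(x)$), is $\psi(x) = x\,\sigma(x)^{-1}$, which is exactly the inverse of the map $\varphi(a) = \sigma(a)a^{-1}$ that the paper uses. So the core construction is the same; the two arguments diverge only in how they establish that the induced $\bar{\psi} : T \to T'$ is an isomorphism. The paper does this directly: it checks that $\ker\varphi = R_{K_1/k}^{(1)}\G_m$ (left exactness, since $K_1 = L^{\sigma}$) and then proves surjectivity of $\varphi$ on $k^{\textup{s}}$-points via Hilbert~90, adjusting the Hilbert-90 representative by a suitable norm so that it lands in $R_{K_2/k}(R_{L/K_2}^{(1)}\G_m)$.

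Your proposed endgame has a genuine gap. Knowing that the image of $\bar{\psi}^{*}$ generates $\chargroup(T)$ modulo $2\chargroup(T)$ only tells you that the cokernel of $\bar{\psi}^{*}$, a finitely generated $\Z$-module, is $2$-divisible; such a module is a finite abelian group of odd order and need not vanish. The ``Nakayama-type'' step amounts to localizing at $2$, which kills $2$-primary torsion but says nothing about odd torsion, so it cannot promote your $\mathbb{F}_2$-computation to surjectivity of $\bar{\psi}^{*}$. Separately, you never actually verify that $\ker\psi = B$ (you only show $B \subseteq \ker\psi$), which is needed even to call $\bar{\psi}$ an isogeny.

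Both issues are repairable, and in fact the whole character-module detour is unnecessary. Checking $\ker\psi = B$ scheme-theoretically is immediate: $\psi(x) = 1$ forces $\sigma(x) = x$, so $x$ lies in $(K_1\otimes_k R)^{\times}$ for any $k$-algebra $R$, and the remaining constraint $N_{L/K_2}(x) = N_{K_1/k}(x) = 1$ is precisely membership in $B(R)$. Once $\bar{\psi}$ is known to be a monomorphism of smooth connected group schemes of the same dimension $2$, it is automatically an isomorphism, with no cokernel computation required. Alternatively, prove surjectivity of $\psi$ on $k^{\textup{s}}$-points as the paper does. Either route works; the $\mathbb{F}_2$-plus-Nakayama argument, as written, does not.
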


\begin{proof}
Let $\Gal(L/K_1) \cong \Z/2\Z = \langle \sigma \rangle$ and \[S = R_{K_1/k}\left(R_{L/K_1}^{(1)}\G_m\right) \cap R_{K_2/k}\left(R_{L/K_2}^{(1)}\G_m\right).\]
It suffices to show that the sequence of $k$-tori \[ 1 \to R_{K_1/k}^{(1)}\G_m \xrightarrow{\iota} R_{K_2/k}\left(R_{L/K_2}^{(1)}\G_m\right) \xrightarrow{\varphi} S \to 1 \] where $\iota$ is the inclusion map and $\varphi$ is defined functorially for any $k$-algebra $A$ by \[
\begin{array}{rcl}
R_{K_2/k}\left(R_{L/K_2}^{(1)}\G_m\right)(A) & \xrightarrow{\varphi(A)} & S(A) \\
a & \mapsto & \sigma(a)a^{-1}
\end{array}\] is short exact.
Left exactness is clear since $K_1 = L^\sigma$, so all that remains is to show that $\varphi$ is surjective after passing to the separable closure $k^{\textup{s}}$.
Let $\beta \in S(k^{\textup{s}})$.
Then \[N_{L \otimes_k k^{\textup{s}}/K_1 \otimes_k k^{\textup{s}}}(\beta) = 1 = N_{L \otimes_k k^{\textup{s}}/K_2 \otimes_k k^{\textup{s}}}(\beta).\]
By Hilbert 90, $\beta = \sigma(\gamma)\gamma^{-1}$ for some $\gamma \in (L \otimes_k k^{\textup{s}})^\times$.
Set $\lambda = N_{L \otimes_k k^{\textup{s}}/K_2 \otimes_k k^{\textup{s}}}(\gamma)$.
Then \[\sigma(\lambda)\lambda^{-1} = N_{L \otimes_k k^{\textup{s}}/K_2 \otimes_k k^{\textup{s}}}(\beta) = 1,\] i.e., $\lambda \in \left((K_2 \otimes_k k^{\textup{s}})^\sigma\right)^\times = (k^{\textup{s}})^\times$.
Since $K_1/k$ is separable and $k^{\textup{s}}$ is separably closed, $K_1 \otimes_k k^{\textup{s}} \cong (k^\textup{s})^3$.
So there is some $\eta \in (K_1 \otimes_k k^{\textup{s}})^\times$ such that $\lambda = N_{K_1 \otimes_k k^{\textup{s}}/k^{\textup{s}}}(\eta)$.
Set $\alpha = \eta^{-1}\gamma$.
Then \[ N_{L \otimes_k k^{\textup{s}}/K_2 \otimes_k k^{\textup{s}}}(\alpha) = N_{L \otimes_k k^{\textup{s}}/K_2 \otimes_k k^{\textup{s}}}\left(\eta^{-1}\gamma\right) = \lambda^{-1}N_{L \otimes_k k^{\textup{s}}/K_2 \otimes_k k^{\textup{s}}}(\gamma) = 1, \] i.e., $\alpha \in R_{K_2/k}\left(R_{L/K_2}^{(1)}\G_m\right)(k^{\textup{s}})$, and \[ \varphi(\alpha) = \varphi\left( \eta^{-1}\gamma\right) = \sigma\left( \eta^{-1}\gamma\right)\left( \eta^{-1}\gamma\right)^{-1} = \sigma(\gamma)\gamma^{-1} = \beta,\] 
completing the proof.
\end{proof}

\section{Conclusions and an Interesting Open Question}

\begin{thm}
Totaro's question has a positive answer for:
\begin{enumerate}[{\normalfont I.}]
\item quasitrivial tori.
\item norm tori of cyclic field extensions.
\item norm tori of prime degree field extensions.
\item tori of rank $r \leq 2$.
\item tori of the form $R_{K_1/k}\left(R_{L/K_1}^{(1)}\G_m\right) \cap R_{K_2/k}\left(R_{L/K_2}^{(1)}\G_m\right)$ where \[
\begin{tikzpicture}[node distance = 0.8cm, auto]
\node (L) {$L$};
\node (K1) [below of = L, below of = L, left of = L, left of = L] {$K_1$};
\node (K2) [below of = L, below of = L, below of = L, right of = L, right of = L] {$K_2$};
\node (k) [below of = L, node distance = 4cm] {$k$};

\draw[-] (L) to node [swap] {$p$} (K1);
\draw[densely dotted] (L) to node {$q$} (K2);
\draw[densely dotted] (k) to node {$q$} (K1);
\draw[-] (k) to node [swap] {$p$} (K2);
\end{tikzpicture}
\] is a diagram of field extensions for distinct primes $p$ and $q$.
\end{enumerate}
\end{thm}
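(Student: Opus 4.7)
The proof is essentially a compilation: four of the five items follow directly from results established earlier in the paper, so my plan is to dispatch (I), (III), (IV), and (V) by direct citation and give a short self-contained treatment of (II) via the bijection between $H^1(k,T)$ and a relative Brauer group. Specifically, for (I), every quasitrivial torus $T = \prod_i R_{L_i/k}\G_m$ has $H^1(k,T) = 0$ by Hilbert 90 together with Shapiro's lemma, so the choice $F = k$ always suffices regardless of the index. For (III), the norm torus $T = R_{L/k}^{(1)}\G_m$ is split by $L$ of prime degree, so Lemma \ref{minsplit}(b) applies at once. Items (IV) and (V) are Theorem \ref{mainthm} and Corollary \ref{pqcor}, respectively.

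The remaining case (II) I would handle through cyclic algebras. Fix $L/k$ cyclic of degree $n$ with $\Gal(L/k) = \langle \sigma \rangle$, set $T = R_{L/k}^{(1)}\G_m$, and let $X \in H^1(k,T)$. By Lemma \ref{normtori}(b), $H^1(k,T) \cong \Br(L/k) \subseteq \Br(k)$, identifying $X$ with the class of the cyclic algebra $A = (L/k,\sigma,\gamma)$. The key step is to show that the torsor index $\ind(X)$ coincides with the Brauer-class index $\ind(A)$: for any separable extension $F/k$, the long exact sequence coming from the norm short exact sequence, now over $F$ with the base-changed \'etale algebra $L \otimes_k F$, embeds $H^1(F,T_F)$ into $\Br(F)$, and restriction is compatible with this embedding, so $X_F = 0$ if and only if $A \otimes_k F$ is split. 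Letting $D$ denote the division algebra Brauer-equivalent to $A$, I would then invoke the classical structure theory of central simple algebras to produce a separable maximal subfield $F \subseteq D$ of degree $\deg D = \ind(A) = \ind(X)$. Such an $F$ splits $D$, hence $A$, so $X_F = 0$ and $[F:k] = \ind(X)$, as required.

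The main obstacle I anticipate is verifying the equality $\ind(X) = \ind(A)$ when $L \otimes_k F$ is not a field but merely a product of fields, since the identification of $H^1(F,T_F)$ with a relative Brauer group is slightly less automatic than in the field case; however, a direct diagram chase using Lemma \ref{normtori}(a) over $F$, together with the fact that the connecting map $H^1(F,T_F) \to \Br(F)$ sends the class of $\gamma$ to the class of $A \otimes_k F$, suffices. Once this is in hand, the existence of a separable maximal subfield of the associated division algebra is classical and finishes (II), and every other piece of the theorem is straightforward assembly from previously established lemmas, propositions, and corollaries.
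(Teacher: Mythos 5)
Your proposal is correct, and it matches the paper's (implicit) approach exactly: the paper states this final theorem without proof as a compilation of earlier results, and items (I), (III), (IV), (V) are indeed Hilbert~90, Lemma~\ref{minsplit}(b), Theorem~\ref{mainthm}, and Corollary~\ref{pqcor} respectively. For item (II), which the paper never writes out, your route through Lemma~\ref{normtori}(b) and K\"othe's theorem on separable maximal subfields is the right one; the only point to be careful about is that the map $H^1(F,T_F)\to\Br(F)$ is not the connecting map of the norm sequence $1\to T\to R_{L/k}\G_m\to\G_m\to1$ used in Lemma~\ref{normtori}(a) (whose cohomology sequence never reaches $\Br$), but rather the cyclic-algebra map, whose compatibility with restriction one verifies either by the periodicity isomorphism for Tate cohomology of cyclic groups or, more cleanly, by observing that for cyclic $L/k$ one has $R^{(1)}_{L/k}\G_m\cong R_{L/k}\G_m/\G_m$ and using the genuine connecting map of $1\to\G_m\to R_{L/k}\G_m\to R_{L/k}\G_m/\G_m\to1$, which is natural in the base.
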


Now, consider the following natural question about division algebras.
\\\\
\noindent
\textbf{Open Question}:
Let $p$ be an odd prime and $D$ and $D'$ be non-split cyclic division algebras over $k$.
If $D_K$ and $D'_K$ share a subfield of degree $p$ over $K$ for some finite separable field extension $K/k$ such that $p \nmid [K:k]$, then do $D$ and $D'$ share a subfield of degree $p$ over $k$?
\\

A negative answer would yield the first known counterexample to Totaro's question.

Let $k$ be a field, and let $L$ and $L'$ be cyclic field extensions of $k$ of degree $p$ such that $D \cong (L/k,\sigma,\gamma)$ and $D' \cong (L'/k,\sigma',\gamma')$.
If $T = R_{L/k}^{(1)}\G_m \times R_{L'/k}^{(1)}\G_m$, then by Lemma \ref{normtori}.(b), $H^1(k,T) \cong \Br(L/k) \times \Br(L'/k)$.
The pair $(D,D')$ then identifies some $X \in H^1(k,T)$ that has a point over $LL'$.
If $L = L'$, then this is the desired common subfield.
Otherwise, $[LL':k] = p^2$.
The condition that $D_K$ and $D'_K$ have a common subfield, say $E$, of degree $p$ over $K$ means that $D_E$ and $D'_E$ are split, and so $X$ has a point over $E$.
But $[E:k] = p[K:k]$.
So $\ind(X) = p$ since $\ind(X) \neq 1$ (because $D$ and $D'$ are non-split) and $\ind(X) \mid (p^2, p[K:k])$.
Since a minimal splitting field of a division algebra is isomorphic to a maximal subfield of the algebra, the open question amounts to Totaro's question for $T$ in the $\ind(X) = p$ case.

As a consequence of our much deeper understanding of quaternion algebras compared to cyclic algebras of odd prime degree, we know that the question has a positive answer when $p = 2$; this is just 5.2.3.(b) in the proof of Theorem \ref{mainthm}.
But unlike in our proof, even having an ``Albert's Theorem'' \cite{albert72} for odd primes would not be strong enough to immediately settle the question because $D \not\cong D^{\textup{opp}}$, and so statements about the splitting fields of $D \otimes_k D'$ seem to be of limited utility.
All this is to say that Totaro's question for tori thinly disguises many fundamental questions about division algebras whose answers, for now, remain elusive.

\bibliographystyle{alpha}
\bibliography{totaro}

\end{document}